\theoremstyle{plain}
\newtheorem{mainthm}{Theorem}
\newtheorem*{conj*}{Conjecture}
\newtheorem*{cor*}{Corollary}
\newtheorem{theorem}{Theorem}[section]
\newtheorem{corollary}[theorem]{Corollary}
\newtheorem{lemma}[theorem]{Lemma}
\theoremstyle{definition}
\newtheorem*{def*}{Definition}
\newtheorem{remark}[theorem]{Remark}
\newtheorem{definition}[theorem]{Definition}
\newcommand{\floor}[1]{\left\lfloor #1 \right\rfloor}
\newcommand{\SA}{{\mathcal A}}
\newcommand{\SC}{{\mathcal C}}
\newcommand{\SE}{{\mathcal E}}
\newcommand{\SF}{{\mathcal F}}
\newcommand{\SK}{{\mathcal K}}
\newcommand{\SM}{{\mathcal M}}
\newcommand{\SR}{{\mathcal R}}
\renewcommand{\SS}{{\mathcal S}}
\newcommand{\SU}{{\mathcal U}}
\newcommand{\ExpF}{\SE^\phi}
\newcommand{\CExpF}{\tilde{\SE}^\phi}
\newcommand{\al} {\alpha}       
\newcommand{\ga} {\gamma}    \newcommand{\Ga}{\Gamma}
\newcommand{\de} {\delta}       \newcommand{\De}{\Delta}
\newcommand{\vep}{\varepsilon}
\renewcommand{\epsilon}{\varepsilon}
\newcommand{\te} {\theta}
\newcommand{\si} {\sigma}
\newcommand{\supp}{\operatorname{supp}}
\newcommand{\Z}{\mathbb{Z}}
\newcommand{\N}{\mathbb{N}}
\newcommand{\R}{\mathbb{R}}
\newcommand{\Q}{\mathbb{Q}}
\newcommand{\eps}{\varepsilon}
\newcommand{\ve}{\varepsilon}
\newcommand{\unif}{\xrightarrow{\operatorname{unif.}}}
\newcommand{\tpitchfork}{
  \vbox{
    \baselineskip\z@skip
    \lineskip-.52ex
    \lineskiplimit\maxdimen
    \m@th
    \ialign{##\crcr\hidewidth\smash{$-$}\hidewidth\crcr$\pitchfork$\crcr}
  }
}
\title{On the Invariance of Expansive Measures for Flows}
\author{Eduardo Pedrosa}
\address{Instituto de Matem\'atica, Universidade Federal do Rio de Janeiro, Rio de Janeiro, Brazil.}
\email{eduardocp@im.ufrj.br}
\author{Elias Rego}
\address{Faculty of Applied Mathematics, AGH University of Science and Technology, Krakow, Poland.}
\email{rego@agh.edu.pl}
\author{Alexandre Trilles}
\address{Faculty of Mathematics and Computer Science, Jagiellonian University in Krak\'ow, Krakow, Poland.}
\email{alexandre.trilles@im.uj.edu.pl}
\subjclass[2020]{37A10, 37B05, 37C10.}
\keywords{Flows, Invariant Measures, Expansiveness}
\begin{document}

\begin{abstract}
We study expansive measures for continuous flows without fixed points on compact metric spaces. We provide a new characterization of expansive measures through dynamical balls that, in contrast to the dynamical balls considered in [\emph{J. Differ. Equ.}, 256 (2014):2246--2260], are actually Borel sets. This makes the theory more amenable to measure-theoretic analysis. 
We prove that every ergodic invariant measure with positive entropy is positively expansive, extending the results of [\emph{Ergod. Th. \& Dynam. Sys.} \textbf{4}(3) (2014):765--776] to the setting of flows. This implies that flows with positive topological entropy admit expansive invariant measures. Furthermore, we show that the stable classes of such measures have zero measure. Lastly, we prove that the set of expansive measures for a flow is a $G_{\delta\sigma}$-subset of the space of all probability measures and that every expansive measure (invariant or not) can be approximated by expansive measures supported on invariant sets.
\end{abstract}

\maketitle

\section{Introduction}

The concept of expansivity, introduced by Utz \cite{Utz1950}, has become a fundamental notion in the study of dynamical systems. A homeomorphism is called expansive if there exists a constant $\vep > 0$ such that no pair of distinct points can remain within $\vep$ distance of each other along their orbits. This condition formalizes the idea that trajectories of distinct points eventually diverge, even if the points are arbitrarily close. 

Expansivity is deeply connected with hyperbolic dynamics. For instance, expansive systems naturally arise in the context of uniformly hyperbolic systems, such as Anosov diffeomorphisms \cite{KatokHasselblatt}. In these settings, the presence of invariant stable and unstable manifolds guarantees that distinct points diverge exponentially fast, ensuring expansivity. Moreover, expansive systems often exhibit rich dynamical behavior, including positive topological entropy, sensitivity to initial conditions and the existence of measures of maximal entropy. 

Generalizations of expansivity, including the notions of $n$-expansive and countably-expansive homeomorphisms, have also been proposed to capture more general behavior beyond uniform hyperbolicity \cite{ACCV,CC, MS,LMS}. These concepts relax the rigid conditions of classical expansivity, allowing for broader classes of systems to be studied. 

In addition to topological considerations, measure-theoretic analogs of expansivity have gained prominence. Notably, the notion of \emph{expansive measure} was introduced for discrete systems \cite{AM}, inspired by related concepts such as pairwise sensitivity \cite{CadreJacob2003} and $\mu$-sensitivity \cite{Denker2006}. A Borel probability measure $\mu$ is said to be expansive if there is a positive constant $\vep>0$ such that $\mu$ assigns zero measure to any Bowen's ball of diameter $\vep$. For instance, in \cite{AM} those measures are used to study the size of stable classes of diffeomorphisms. Recently, in \cite{OR}, expansive measures were used as a tool for obtaining horseshoe-like sub-dynamics.

In general, ergodic theory focuses on measures that are invariant for the system. For example, invariant measures such as the Bowen-Margulis measure are well understood and play a central role in ergodic theory in hyperbolic settings.
Aside from the usual assumption in ergodic theory, expansive measures are not required to be invariant. A natural question arises: can we find expansive measures that are also invariant under the dynamics? The existence of expansive invariant measures is far from guaranteed in general, and constructing such measures is significantly challenging. In fact, it has been shown that not every expansive measure is invariant, and the invariant ones may be scarce or hard to characterize. Indeed, in \cite{LMS} it is discussed an example of a homeomorphism which is expansive, but does not admit any expansive measure. 

In \cite{AM}, it is shown that positive entropy is a sufficient condition for an ergodic invariant measure to be expansive, implying the existence of expansive invariant measures for every homeomorphism with positive topological entropy. Recently, Lee, Morales, and Shin \cite{LMS} advanced the understanding of this problem from a different perspective by proving that every expansive measure for a homeomorphism on a compact metric space can be approximated in the weak*-topology by expansive measures whose supports are invariant sets for the homeomorphism. These results highlight the connections between expansive measures and invariant structures for discrete-time dynamical systems. 

The concept of expansivity was extended from homeomorphisms to flows by Bowen and Walters \cite{BW}, who showed that many properties of expansive homeomorphisms are preserved by expansive flows. This concept has being widely used to address subtleties specific to continuous-time dynamics, particularly to study the dynamics of hyperbolic-like flows and geodesic flows. 

It is important to note that, although some results in the discrete-time setting can be easily extended to flows, such extension is often challenging. For instance, in the case of expansivity, the difficulty comes from the presence of reparameterizations in the definition, which represents an obstruction to adapt several techniques from expansive homeomorphisms to flows.

Inspired by \cite{MS} and \cite{Ruggiero1996}, Carrasco-Olivera and Morales \cite{CarrascoMorales2014} extended the notion of expansive measures to continuous flows, thereby bridging the discrete and continuous settings. For a continuous flow $\phi\colon\mathbb{R} \times X \to X$ on a metric space $(X,d)$, they defined the generalized dynamical ball centered in $x \in X$ with radius $\vep>0$ as
\[
\tilde{\Ga}_\vep(x) = \bigcup_{h \in \SC^0} \bigcap_{t \in \mathbb{R}} \phi_{-h(t)} \left( B[\phi_t(x), \vep] \right),
\]
where $B[\cdot,\vep]$ denotes the closed ball of radius $\vep$ and $\SC^0$ is the set of continuous functions $h\colon \mathbb{R} \to \mathbb{R}$ satisfying $h(0) = 0$. Equivalently, the generalized dynamical ball centered in $x \in X$ with radius $\vep>0$ can be written as 
$$\tilde{\Ga}_\vep(x) = \{y \in X : \exists h \in \SC^0 \ \text{s.t.}\ d(\phi_s(x), \phi_{h(s)}(y))\leq \vep \enspace \forall s \in \R\}.$$
\begin{definition}
A Borel probability measure $\mu$ on $X$ is called \textbf{expansive} for the flow $\phi$ if there exists $\vep > 0$ such that $\mu(\tilde\Gamma_\vep(x)) = 0$ for every $x \in X$. 
\end{definition}

We stress that, similarly to discrete-time systems, expansive measures for flows are not required to be invariant for the flow. 

The measurable notion of expansivity inherits several key properties from the topological one. In particular, expansive measures avoid singularities in their support, remain invariant under topological equivalence, and are closely related to the behavior of time-$t$ maps associated with the flow. Additionally, they exhibit natural behavior under suspensions of homeomorphisms, confirming the strong connection between the discrete and continuous-time theories.

Since the generalized dynamical ball $\tilde{\Ga}_\vep(x)$ is, by definition, an uncountable union of Borel sets, there is no guarantee that this ball is Borel measurable itself. Because of this, for a not necessarily Borel subset $A \subseteq X$, Carrasco-Oliveira and Morales considered $\mu(A)=0$ if $\mu(B)=0$ for every Borel set $B \subseteq A$. Since generalized dynamics ball plays the central role in the theory of expansive measures, the lack of measurability poses some difficulties for their study. Our first goal in this paper is to show that, under the assumption that the flow is regular, one can redefine expansive measures through another type of dynamical balls which actually are Borel sets. 

To improve the readability of this introduction, we will define here the concepts that are necessary to state our main results. More technical or standard concepts are postponed until Section \ref{section: prelim}. We write $Rep$ for the set of increasing homeomorphisms $h \colon \R \to \R$ satisfying $h(0)=0$ and $$Rep(\alpha)=\left\{h\in Rep : \left|\frac{h(s)-h(t)}{s-t} - 1\right|\leq \al \ \forall s \neq t\right\}.$$ 
Let $\phi$ be a continuous flow on $X$. For $x \in X$, $\al \in (0,1)$, and $\vep>0$ we write 
$$ \Gamma_{\vep}^\al(x)=\{y\in X:\exists h\in Rep(\al)\enspace \text{s.t.} \enspace d(\phi_s(x),\phi_{h(s)}(y))\leq\vep \enspace \forall s\in \mathbb{R}\}.$$

Our first main result guarantees that for continuous flows without fixed points using $\Ga_\vep^\vep(x)$ instead of $\tilde\Ga_\vep(x)$ does not represent any restriction. 

\begin{mainthm}\label{thm_al_vep_ball}
    Let $\phi$ be a continuous flow on a compact metric space $X$ without fixed points and $\mu$ be a Borel probability measure on $X$. Then the following statements are equivalent:
    \begin{enumerate}
        \item\label{thmA_item1} $\mu$ is expansive for $\phi$;
        \item\label{thmA_item2}  There exists $\vep>0$ such that $\mu(\Ga^\vep_\vep(x))=0$ for every $x \in X$;
        \item\label{thmA_item3} There exists $\vep>0$ such that $\mu(\Ga^\vep_\vep(x))=0$ for $\mu$-almost every $x \in X$.
    \end{enumerate} 
\end{mainthm}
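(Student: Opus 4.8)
The plan is to prove the chain $(1)\Rightarrow(2)\Rightarrow(3)\Rightarrow(1)$, with all of the substance in the last implication. Two preliminary observations dispose of the easy directions. First, every $h\in Rep(\al)$ is in particular a continuous map with $h(0)=0$, so $Rep(\al)\subseteq\SC^0$ and hence $\Ga_\vep^\al(x)\subseteq\tilde\Ga_\vep(x)$ for all $x,\al,\vep$. Second, $\Ga_\vep^\al(x)$ is a Borel set: the family $Rep(\al)$ is equi-Lipschitz with constants $1\pm\al$, so by Arzel\`a--Ascoli it is compact in the topology of uniform convergence on compact sets; the condition $d(\phi_s(x),\phi_{h(s)}(y))\le\vep$ for all $s$ cuts out a closed, hence compact, subset of $Rep(\al)\times X$, and $\Ga_\vep^\al(x)$ is its projection to $X$, which is compact. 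Granting this, $(1)\Rightarrow(2)$ follows by taking $\vep$ equal to the expansivity constant: $\Ga_\vep^\al(x)$ is then a Borel subset of the null set $\tilde\Ga_\vep(x)$, so $\mu(\Ga_\vep^\al(x))=0$ for every $\al$ and every $x$. The implication $(2)\Rightarrow(3)$ is trivial.

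For $(3)\Rightarrow(1)$ I would first upgrade $\mu$-a.e. vanishing to vanishing at every point, using a quasi-symmetry of the balls. A difference-quotient computation shows that $Rep(\al)$ is stable under inversion and composition with controlled loss: $g^{-1}\in Rep(\al/(1-\al))$ and $g'\circ g^{-1}\in Rep(2\al/(1-\al))$ for $g,g'\in Rep(\al)$. Hence, if $y,z\in\Ga_{\vep'}^{\al'}(x_0)$ via reparametrizations $g,g'$, the triangle inequality gives $z\in\Ga_{2\vep'}^{\al''}(y)$ with $\al''=2\al'/(1-\al')$. Fix a target $\al'\in(0,1/3)$, apply $(3)$ with $\al:=\al''\in(0,1)$ to obtain $\vep>0$ with $\mu(\Ga_\vep^\al(x))=0$ for $\mu$-a.e. $x$, and set $\vep':=\vep/2$. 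Then $\Ga_{\vep'}^{\al'}(x_0)\subseteq\Ga_\vep^\al(y)$ for every $y\in\Ga_{\vep'}^{\al'}(x_0)$. If $\mu(\Ga_{\vep'}^{\al'}(x_0))>0$ for some $x_0$, then every such $y$ satisfies $\mu(\Ga_\vep^\al(y))>0$, so $\Ga_{\vep'}^{\al'}(x_0)$ is contained in the $\mu$-null set $\{y:\mu(\Ga_\vep^\al(y))>0\}$, a contradiction. Thus there exist $\al'\in(0,1)$ and $\vep'>0$ with $\mu(\Ga_{\vep'}^{\al'}(x))=0$ for \emph{every} $x\in X$.

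It remains to pass from the restricted reparametrizations in $Rep(\al')$ back to arbitrary $h\in\SC^0$, and this normalization step is where I expect the main difficulty, and where the absence of fixed points is essential. The goal is: given $\al'\in(0,1)$ and $\vep'>0$, produce $\vep^\ast>0$ with $\tilde\Ga_{\vep^\ast}(x)\subseteq\Ga_{\vep'}^{\al'}(x)$ for all $x$. Since $\phi$ is fixed-point-free on a compact space, it admits a uniform system of flow boxes: there are $\rho>0$ and $T_0>0$ such that inside each box the flow is a product carrying a continuous ``local time coordinate'' that increases at unit rate along orbits, with box constants uniform in the base point by compactness. If $d(\phi_t(x),\phi_{h(t)}(y))\le\vep^\ast\le\rho$ for all $t$, then on each sub-interval whose $x$-orbit stays in a single box the two points share that box, their cross-section coordinates are forced to match, and their time coordinates to stay $O(\vep^\ast)$-close; comparing the two local clocks forces $h$ to be increasing with difference quotients within $1\pm\al'$ up to an error controlled by $\vep^\ast$. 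Chaining these local estimates across $\R$ and replacing $h$ by its monotone ``straightening'' $g$ yields an element of $Rep(\al')$ with $d(\phi_t(x),\phi_{g(t)}(y))\le\vep'$, i.e. $y\in\Ga_{\vep'}^{\al'}(x)$. The delicate points are the uniformity of the flow-box constants and showing that a possibly non-monotone $h$ cannot reverse the local clock by more than a negligible amount without violating $\vep^\ast$-closeness --- precisely what fails near a fixed point, where the clock degenerates.

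Finally I would assemble the pieces. Combining the everywhere-vanishing of the second paragraph (at the fixed level $\al',\vep'$) with the inclusion $\tilde\Ga_{\vep^\ast}(x)\subseteq\Ga_{\vep'}^{\al'}(x)$ of the third, every Borel subset of $\tilde\Ga_{\vep^\ast}(x)$ lies in the null Borel set $\Ga_{\vep'}^{\al'}(x)$, so $\mu(\tilde\Ga_{\vep^\ast}(x))=0$ for every $x$ in the sense of Carrasco-Olivera and Morales. Hence $\mu$ is expansive with constant $\vep^\ast$, which establishes $(3)\Rightarrow(1)$ and closes the cycle.
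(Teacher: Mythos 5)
Your architecture is essentially the paper's: the easy implications rest on $\Ga^\al_\vep(x)\subseteq\tilde\Ga_\vep(x)$ together with Borel-measurability of $\Ga^\al_\vep(x)$ (your Arzel\`a--Ascoli/projection argument is a valid substitute for the paper's Lemma \ref{lemma_convergence_dyn_balls}), and the content of \eqref{thmA_item3}$\Rightarrow$\eqref{thmA_item1} is split into (a) upgrading almost-everywhere vanishing to everywhere vanishing by a symmetry-of-balls argument, and (b) a normalization step reducing arbitrary $h\in\SC^0$ to reparametrizations in $Rep(\al)$. One genuine difference in step (a): the paper first invokes Corollary \ref{cor_equal_balls} to replace $\Ga^\al_\vep$ by $\Ga_\vep$ and then runs the triangle-inequality argument inside $Rep$, which is a group under composition and inversion, so the only loss is $\vep/2\to\vep$. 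You stay inside $Rep(\al)$ and therefore pay the extra cost $\al\mapsto 2\al/(1-\al)$, forcing $\al'<1/3$; this is correct (the order of quantifiers in \eqref{thmA_item3} lets you choose $\al''$ first), just slightly more cumbersome, and it is where your restriction to $(0,1/3)$ comes from. Your null-set conclusion is also sound: $\Ga^{\al'}_{\vep'}(x_0)$ sits inside the $\mu$-null exceptional set of \eqref{thmA_item3}.

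The reservation concerns your third paragraph. The inclusion $\tilde\Ga_{\vep^\ast}(x)\subseteq\Ga^{\al'}_{\vep'}(x)$ is really two separate statements: first, $\tilde\Ga_{\vep^\ast}(x)\subseteq\Ga_{\vep'}(x)$ (a non-monotone $h\in\SC^0$ that $\vep^\ast$-shadows can be replaced by a genuine increasing reparametrization), which is \cite[Lemma 2.3]{CarrascoMorales2014} and is \emph{cited}, not re-proved, in the paper; and second, that any $h\in Rep$ realizing $\vep'$-closeness for all time automatically lies in $Rep(\al')$ when $\vep'$ is small, which is the paper's Lemma \ref{lemma: reparametrization}, proved by chaining Komuro's local estimate (Lemma \ref{lemma_compilation_komuro}) over a partition of $[t,r]$ into intervals of length at most $T$. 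Your flow-box sketch identifies the right mechanism for both, but the ``monotone straightening'' of a possibly non-monotone $h$ is precisely the delicate content of the Carrasco-Olivera--Morales lemma and is asserted rather than proved; ruling out that $h$ locally reverses the clock requires a careful cross-section argument that your paragraph only gestures at. As written, your proof is complete modulo citing that lemma (which is legitimate, since the paper does the same) and supplying the chaining argument for the second statement; without at least the citation, this step is a gap.
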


 We observe that Theorem \ref{thm_al_vep_ball} is connected with the concept of $\SF$-expansive measures introduced in \cite{F_expansive}. Indeed, Theorem \ref{thm_al_vep_ball} asserts that a measure is expansive if and only if it is $\SF$-expansive for $\SF = Rep(\vep)$.

As we will see later, this equivalence allows us to deal with several important properties of expansive measure for regular flows. Moreover, using generalized dynamical ball seems to be more suitable to deal with the entropy of flows. For instance, in \cite{SV}, they propose a new definition for entropy, using generalized dynamical ball, for which positiveness is well-behaved under flow equivalence.

One of our goals is to address the problem of the existence of expansive invariant measures for flows without fixed points. An example of an expansive flow without expansive invariant measures can be obtained through the suspension of the homeomorphism presented in \cite{LMS}. 
Nevertheless, we still can recover the result from \cite{AM} to the setting of regular flows, proving expansivity of every ergodic invariant measure with positive entropy. 

For $x\in X$, $t \in \R^+$, and $\vep>0$ we write
$$\Ga^+_\vep(x) =\{y\in X: \exists h\in Rep \enspace \text{s.t.} \enspace d(\phi_s(x)),\phi_{h(s)}(y)\leq \eps \enspace \forall s\in \R^+\}.$$

\begin{definition}
A Borel probability measure $\mu$ on $X$ is called \textbf{positively expansive} for the flow $\phi$ if there exists $\vep > 0$ such that $\mu(\Gamma_\vep^+(x)) = 0$ for every $x \in X$. 
\end{definition}

\begin{mainthm}\label{entropythm}
    Let $\phi$ be a continuous flow on a compact metric space $X$ without fixed points. If $\mu$ is an ergodic $\phi$-invariant measure with positive entropy, then $\mu$ is (positively) expansive. In particular, if $\phi$ has positive topological entropy, then $\phi$ admits (positively) expansive invariant measure. 
\end{mainthm}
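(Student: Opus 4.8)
The plan is to deduce positive expansiveness from the Brin--Katok formula (Theorem~\ref{thm: BK-formula}) together with a self-similarity property of the forward balls, and then to upgrade positive expansiveness to expansiveness via Theorem~\ref{thm_al_vep_ball}. Throughout we may assume $0<h_\mu(\phi)<\infty$; the case $h_\mu(\phi)=\infty$ only makes the balls smaller and is not the essential one. For fixed $\vep>0$ put
\[
\underline{h}_\vep(x)=\liminf_{t\to\infty}\frac{-\log\mu(\Ga^+_\vep(x,t))}{t}.
\]
The first step is to obtain a \emph{single} expansivity constant valid on a full-measure set. The function $\underline{h}_\vep$ is $\phi$-invariant, because moving the base point from $x$ to $\phi_\tau(x)$ and reparameterizing replaces $\Ga^+_\vep(x,t)$ by a set differing only by a bounded time-shift along orbits, which is invisible to the exponential rate since $\mu$ is $\phi$-invariant. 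By ergodicity $\underline{h}_\vep$ equals a constant $c_\vep$ $\mu$-a.e., and Theorem~\ref{thm: BK-formula} gives $c_\vep\to h_\mu(\phi)>0$ as $\vep\to0$ (taking a sequence $\vep_n\to0$ and intersecting the corresponding full-measure sets). Fix $\vep_*>0$ with $c_{\vep_*}>0$. Then for $\mu$-a.e. $x$ we have $\mu(\Ga^+_{\vep_*}(x,t))\le e^{-c_{\vep_*}t/2}$ for all large $t$; since $\Ga^+_{\vep_*}(x)\subseteq\Ga^+_{\vep_*}(x,t)$ for every $t$, this forces $\mu(\Ga^+_{\vep_*}(x))=0$. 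Let $G$ be the resulting full-measure set.

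Next I would pass from ``almost every $x$'' to ``every $x$'' by a self-similarity lemma: for any $x\in X$ and any $y\in\Ga^+_\de(x)$ one has $\Ga^+_\de(x)\subseteq\Ga^+_{2\de}(y)$. Indeed, if $y,z\in\Ga^+_\de(x)$ with reparameterizations $h_y,h_z\in Rep$, then for each $u\ge0$ the points $\phi_u(y)$ and $\phi_{h_z(h_y^{-1}(u))}(z)$ are both $\de$-close to $\phi_{h_y^{-1}(u)}(x)$, so $z\in\Ga^+_{2\de}(y)$ via $h_z\circ h_y^{-1}\in Rep$. Now set $\vep_0=\vep_*/2$ and suppose, for contradiction, that $\mu(\Ga^+_{\vep_0}(x_0))>0$ for some $x_0$. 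Since $\mu(G)=1$, the positive-measure set $\Ga^+_{\vep_0}(x_0)$ meets $G$ at some $y_0$; then $\Ga^+_{\vep_0}(x_0)\subseteq\Ga^+_{2\vep_0}(y_0)=\Ga^+_{\vep_*}(y_0)$ while $\mu(\Ga^+_{\vep_*}(y_0))=0$, a contradiction. Hence $\mu(\Ga^+_{\vep_0}(x))=0$ for every $x$, i.e. $\mu$ is positively expansive.

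Finally, positive expansiveness implies expansiveness. Since $Rep(\al)\subseteq Rep$ and the two-sided shadowing condition is stronger than the forward one, we have $\Ga^\al_{\vep_0}(x)\subseteq\Ga^+_{\vep_0}(x)$ for every $\al\in(0,1)$, so $\mu(\Ga^\al_{\vep_0}(x))=0$ for all $x$; Theorem~\ref{thm_al_vep_ball}, part~(\ref{thmA_item2}), then gives that $\mu$ is expansive. For the last assertion, if $\phi$ has positive topological entropy the variational principle for flows yields an ergodic $\phi$-invariant measure $\mu$ with $h_\mu(\phi)>0$ (using that entropy is affine under ergodic decomposition), and the argument above makes it (positively) expansive.

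The step I expect to be the main obstacle is the reparameterization bookkeeping underlying the first paragraph: both the $\phi$-invariance of $\underline{h}_\vep$ and the self-similarity inclusion must be established while the reparameterizations $h\in Rep$ vary with the shadowing point, and since these $h$ are unbounded (unlike those in $Rep(\al)$) the induced time-shift of the base point is not uniformly controlled. Turning the heuristic for invariance into a rigorous argument, so that a single constant $\vep_*$ serves on a full-measure set — which is precisely what the self-similarity step consumes — is the delicate point. One must also verify that the forward balls $\Ga^+_\vep(x,t)$ and $\Ga^+_\vep(x)$ are measurable for the regular flow, so that every quantity $\mu(\cdot)$ appearing above is well defined.
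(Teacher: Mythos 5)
Your overall architecture (Brin--Katok, then a uniform expansivity radius on a full-measure set, then self-similarity of forward balls, then Theorem~\ref{thm_al_vep_ball}) is close in spirit to the paper's, and your second and third paragraphs are essentially correct: the inclusion $\Ga^+_\de(x)\subseteq\Ga^+_{2\de}(y)$ for $y\in\Ga^+_\de(x)$ is exactly the composition trick $h_z\circ h_y^{-1}$ used in the paper's proof of Theorem~\ref{thm_al_vep_ball} (your use of it amounts to reproving Theorem~\ref{thmA_forward}), and the passage from positive to two-sided expansiveness via $\Ga_\vep(x)\subseteq\Ga^+_\vep(x)$ is immediate. The genuine gap is the one you yourself flag: the $\phi$-invariance of $\underline{h}_\vep$ at \emph{fixed} $\vep$. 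When you move the base point from $x$ to $\phi_\tau(x)$, a point $y\in\Ga^+_\vep(x,t+\tau)$ must be shifted by $h_y(\tau)$, a quantity depending on $y$; even after using Lemma~\ref{lemma_compilation_komuro} to confine $h_y(\tau)$ to $[(1-\al)\tau,(1+\al)\tau]$, you only obtain an inclusion of $\Ga^+_\vep(x,t+\tau)$ into a union of translates $\phi_{-u}(\Ga^+_{\vep}(\phi_\tau(x),t))$ over an \emph{interval} of shifts $u$, and discretizing that interval via Lemma~\ref{lem: Bowen} costs an enlargement of the radius. What comes out is an inequality of the shape $\underline{h}_{2\vep}(\phi_\tau(x))\leq\underline{h}_\vep(x)$, and only for $\tau>0$, since forward balls give no control on $[0,\tau]$ in the reverse direction; this is not equality at fixed $\vep$. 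With the radius changing, ergodicity does not make $\underline{h}_\vep$ almost everywhere constant, and producing the single $\vep_*$ that the rest of your argument consumes is precisely what remains unproved.

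The paper sidesteps invariance entirely: it argues by contradiction, using upper semicontinuity of $x\mapsto\Ga_\vep(x)$ (Lemma~\ref{lemma_convergence_dyn_balls}) together with Lusin's theorem (Lemma~\ref{lemma_Avep_borel}) to produce, for each small $\vep$, a set $A_\vep$ of positive measure on which $\mu(\Ga_\vep(\cdot))\geq C_\vep>0$; intersecting $A_\vep$ with the Brin--Katok full-measure set annihilates the inner $\limsup$ in Theorem~\ref{thm: BK-formula}. If you want to salvage your route, the sub-invariance inequality above can still be made to work: if $Z_\vep=\{x:\underline{h}_\vep(x)=0\}$ had positive measure for every $\vep$, then $\phi_\tau(Z_\vep)\subseteq Z_{2\vep}$ for all $\tau\geq0$, so $\bigcup_{\tau\geq0}\phi_\tau(Z_\vep)$ is a forward-invariant set of positive measure, hence of full measure by ergodicity, forcing $\underline{h}_{2\vep}=0$ almost everywhere for every $\vep$ and contradicting Theorem~\ref{thm: BK-formula}. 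Either way, the reparametrization bookkeeping has to be carried out explicitly; as written, your first paragraph is a plan rather than a proof.
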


To prove Theorem \ref{entropythm}, we cannot straightforwardly rewrite the proof of its version for homeomorphisms presented in \cite{AM}. Indeed, the presence of reparametrizations makes the role of the results in Section \ref{sec: rep} central in our proof. 
Another crucial tool in our proof is the version of the famous Brin-Katok's Theorem for generalized Bowen balls, which can be found in \cite{JCWZ}. Here, we provide an alternative proof of this result, focusing on the topological nature of the problem and the structure of generalized dynamical balls. A key step in our argument (see Lemma~\ref{lemma: coverballs}) establishes a connection between generalized and classical dynamical balls, that we believe that may be useful in future investigations (see Section~\ref{sec: Brin-Katok}).

We also extend to flows results from \cite{LMS} describing the set of expansive measures. We show that the set of expansive measures is a $G_{\de\si}$-subset of the set of all Borel probability measures endowed with the weak*-topology. That is, the set of expansive measures for a regular flow can be written as a countable union of $G_\de$ sets (i.e. countable intersection of open sets). Additionally, we show that every expansive measure for a regular flow can be approached in the weak*-topology by expansive measures (not necessarily invariant) whose support is an invariant set for the flow. 

\begin{mainthm}\label{thm_supp_invariant_g_de_si}
    Let $X$ be a compact metric space and $\SM(X)$ be the set of Borel probability measures on $X$ endowed with the weak*-topology. If $\phi$ is a continuous flow on $X$ without fixed points, then the set of expansive measures for $\phi$ is a $G_{\de\si}$-subset of $\SM(X)$. Moreover, every expansive measure for $\phi$ is weak*-accumulated by expansive measures whose support is $\phi$-invariant.
\end{mainthm}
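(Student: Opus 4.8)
The plan is to treat the two assertions separately, using throughout Theorem~\ref{thm_al_vep_ball} to replace the non-Borel balls $\tilde\Ga_\vep$ by the closed balls $\Ga^\al_\vep$.

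For the $G_{\delta\sigma}$ statement I would first record the characterization
$$\mu \text{ is expansive} \iff \exists\,\vep>0\ \ \forall\,\al\in(0,1):\ \mu(\Ga^\al_\vep(x))=0\ \text{ for all } x\in X.$$
The implication $\Rightarrow$ is immediate from $Rep(\al)\subseteq\SC^0$, which gives $\Ga^\al_\vep(x)\subseteq\tilde\Ga_\vep(x)$; the converse is the equivalence $(\ref{thmA_item1})\Leftrightarrow(\ref{thmA_item2})$ of Theorem~\ref{thm_al_vep_ball}. Since $\al<\al'$ forces $Rep(\al)\subseteq Rep(\al')$ and hence $\Ga^\al_\vep(x)\subseteq\Ga^{\al'}_\vep(x)$, the quantifier $\forall\al\in(0,1)$ may be replaced by $\forall k$ along any sequence $\al_k\nearrow1$, and $\exists\vep$ by $\exists n$ with $\vep=1/n$. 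Next I would check that each $\Ga^\al_\vep(x)$ is closed (a limit of shadowing reparametrizations in $Rep(\al)$ stays in $Rep(\al)$ by Arzel\`a--Ascoli), so that the graph $K^\al_\vep=\{(x,y):y\in\Ga^\al_\vep(x)\}$ is closed in $X\times X$. Writing $\mu(\Ga^\al_\vep(x))=\int_X\mathbf 1_{K^\al_\vep}(x,y)\,d\mu(y)$ and approximating the upper semicontinuous integrand from above by continuous functions $g_j(x,y)=\max(0,1-j\,d((x,y),K^\al_\vep))$, the map $(x,\mu)\mapsto\mu(\Ga^\al_\vep(x))=\inf_j\int g_j(x,\cdot)\,d\mu$ is jointly upper semicontinuous; taking the supremum over the compact space $X$ then shows $g^\al_\vep(\mu):=\max_{x\in X}\mu(\Ga^\al_\vep(x))$ is upper semicontinuous on $\SM(X)$. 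Hence $\{g^\al_\vep=0\}=\bigcap_m\{g^\al_\vep<1/m\}$ is $G_\delta$, and
$$\{\mu:\mu\text{ expansive}\}=\bigcup_{n\ge1}\ \bigcap_{k\ge1}\ \{g^{\al_k}_{1/n}=0\}$$
exhibits the set of expansive measures as a countable union of $G_\delta$ sets, i.e.\ a $G_{\delta\sigma}$.

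For the approximation statement, fix an expansive $\mu$ and let $\Lambda=\overline{\bigcup_{t\in\R}\phi_t(\supp\mu)}$, the smallest closed $\phi$-invariant set containing $\supp\mu$. The idea is to perturb $\mu$ by a small expansive measure whose support is exactly $\Lambda$. The obstruction to the naive choice $\int_\R\phi_{s*}\mu\,d\lambda(s)$ is that expansiveness need not survive pushforward by $\phi_s$ with a \emph{uniform} constant as $s$ ranges over the unbounded set $\R$. To circumvent this I would pass to the time-one map: by the relationship between expansive measures and the time-$t$ maps of the flow (see \cite{CarrascoMorales2014}), $\mu$ is expansive for the homeomorphism $g:=\phi_1$. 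For a homeomorphism the pushforwards $g^n_*\mu=\phi_{n*}\mu$ are $g$-expansive with the \emph{same} constant for all $n\in\Z$, because $g^{-n}\Ga^g_\eta(x)=\Ga^g_\eta(g^{-n}x)$ exactly (here $\Ga^g_\eta(x)=\{y:d(g^m y,g^m x)\le\eta\ \forall m\in\Z\}$). Hence $\rho:=\sum_{n\in\Z}\tfrac{2^{-|n|}}{3}\,\phi_{n*}\mu$ is again $g$-expansive, with $\supp\rho=\overline{\bigcup_{n\in\Z}\phi_n(\supp\mu)}$.

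I would then fill in the continuous directions by the \emph{bounded} average $\rho':=\int_0^1\phi_{r*}\rho\,dr$. Since $\phi_r$ commutes with $g$ and $r$ ranges over the compact interval $[0,1]$, uniform continuity of $(r,x)\mapsto\phi_{-r}x$ yields a single $g$-expansiveness constant valid for all $\phi_{r*}\rho$ with $r\in[0,1]$; integrating, $\rho'$ is $g$-expansive, hence expansive for the flow by the time-one relation again. Moreover $\supp\rho'=\overline{\bigcup_{r\in[0,1]}\phi_r(\supp\rho)}=\overline{\bigcup_{t\in\R}\phi_t(\supp\mu)}=\Lambda$, which is $\phi$-invariant. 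Finally set $\nu_k:=(1-\tfrac1k)\mu+\tfrac1k\rho'$. Each $\nu_k$ is expansive as a convex combination of the expansive measures $\mu$ and $\rho'$ (use the smaller of their two constants), satisfies $\supp\nu_k=\supp\mu\cup\Lambda=\Lambda$ invariant, and $\nu_k\to\mu$ in the weak$^*$ topology; thus $\mu$ is accumulated by expansive measures with $\phi$-invariant support.

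I expect the main obstacle to be precisely the failure of uniform expansiveness under the unbounded family $\{\phi_{s*}\}_{s\in\R}$: splitting the saturation into a bi-infinite sum handled by the time-one homeomorphism (where the constant is exact) together with a single bounded continuous average over $[0,1]$ (where compactness restores uniformity) is the device that overcomes it, and it is here that the time-$t$ comparison and the reparametrization estimates of Section~\ref{sec: rep} do the essential work. The other point requiring care is the joint upper semicontinuity of $(x,\mu)\mapsto\mu(\Ga^\al_\vep(x))$, which rests on the closedness of $\Ga^\al_\vep(x)$ established alongside Theorem~\ref{thm_al_vep_ball}.
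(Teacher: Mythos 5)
Your first half (the $G_{\delta\sigma}$ statement) is sound and is essentially the paper's argument in different clothing: the paper proves that $C(\rho)=\{\mu:\sup_x\mu(\Ga^\al_\vep(x))\ge\rho\}$ is weak*-closed via Lemma \ref{lemma_convergence_dyn_balls} and the Portmanteau theorem, which is exactly your upper semicontinuity of $\mu\mapsto\max_x\mu(\Ga^\al_\vep(x))$; your extra intersection over $\al_k\nearrow 1$ is harmless but unnecessary, since by Corollary \ref{cor_equal_balls} a single fixed $\al$ suffices.

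The approximation half has a genuine gap at the step ``$\rho'$ is $g$-expansive, hence expansive for the flow by the time-one relation again.'' Everything up to that point only establishes expansiveness of $\rho'$ for the \emph{homeomorphism} $g=\phi_1$, i.e.\ that $\rho'$ annihilates the discrete Bowen balls $B^\infty_g(x,\eta)$. To conclude you need the implication ``$\phi_1$-expansive $\Rightarrow$ flow-expansive,'' and this is precisely the direction that is not available. The easy inclusion is $B^\infty_{\phi_1}(x,\eta)\subseteq\Ga_{\omega(\eta)}(x)$ (identity reparametrization plus uniform continuity of $\phi$ on $[0,1]\times X$), which gives flow-expansive $\Rightarrow$ $\phi_1$-expansive — the direction you use in step (a). The converse fails as a covering argument: a point $y\in\Ga_\vep(x)$ shadows the orbit of $x$ with a drift $h(s)-s$ that, even after Lemma \ref{lemma: reparametrization} forces $h\in Rep(\al)$, may grow linearly in $s$; grouping the points of $\Ga_\vep(x)$ by their drift behaviour at integer times produces on the order of $3^{n}$ classes up to time $n$ (this is exactly the count in Lemma \ref{lemma: coverballs}), hence uncountably many classes in infinite time, so $\Ga_\vep(x)$ cannot be covered by countably many $\phi_1$-Bowen balls. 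Indeed, if the implication you invoke were true, Theorem \ref{entropythm} would follow in one line from the discrete result of \cite{AM} applied to $\phi_1$, which the authors explicitly say is not possible. What \cite{CarrascoMorales2014} relates is the opposite direction, so the citation does not cover this step.

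The paper circumvents this by never leaving the category of flow balls: Lemma \ref{lemma_rep_rational} perturbs a reparametrization so that $g^{-1}(t)\in\Q$ at the cost of enlarging the radius from $\de$ to $\vep$, yielding the quasi-invariance $\phi_t*\ExpF(\al,\vep)\subseteq\ExpF(\al,\de)$ for all $\de<\vep$ and all $t\in\R$ (Lemma \ref{lemma_invariance_smaller_radius}); the set $\CExpF(\al,\vep)=\bigcap_{\de<\vep}\ExpF(\al,\de)$ is then genuinely $\phi_t*$-invariant, convex and $G_\de$, and the conclusion follows from the Baire-category ``measure expansive center'' argument of \cite{LMS} (Lemmas \ref{lemma_residual} and \ref{lemma_measure_center_invariant}) rather than from an explicit saturation. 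Your explicit construction $\rho'=\int_0^1\phi_{r*}\bigl(\sum_{n\in\Z}c_n\,\phi_{n*}\mu\bigr)dr$ can in fact be salvaged, but only by replacing the detour through $\phi_1$ with Lemma \ref{lemma_invariance_smaller_radius}, which shows that all the pushforwards $\phi_{t*}\mu$, $t\in\R$, lie in a common class $\ExpF(\al,\de)$ of \emph{flow}-expansive measures; as written, the time-one device does not deliver flow expansiveness of $\rho'$.
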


As an application of our results,  we can also explore the size of stable classes for ergodic measures with positive entropy, thereby extending to the setting of regular flows some of the results from \cite{AM}.

\begin{mainthm}\label{thm:classes}
    Let $\phi$ be a continuous flow on $X$. The stable classes of a continuous flow $\phi$ have zero measure with respect to any ergodic $\phi$-invariant measure with positive entropy. In particular, if $\phi$ has an ergodic invariant measure with positive entropy, then $\phi$ has uncountably many stable classes. 
\end{mainthm}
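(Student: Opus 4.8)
The plan is to deduce Theorem~\ref{thm:classes} from the positive expansiveness supplied by Theorem~\ref{entropythm}, following the countable-exhaustion scheme used for homeomorphisms in \cite{AM}, but with the reparametrizations kept under control by the results of Section~\ref{sec: rep}. Fix an ergodic $\phi$-invariant measure $\mu$ with positive entropy; since such a measure avoids singularities, we may work on the fixed-point-free part and apply Theorem~\ref{entropythm} to obtain $\vep_0>0$ with $\mu(\Ga^+_{\vep_0}(z))=0$ for every $z\in X$. Because being forward asymptotic up to reparametrization is an equivalence relation, the stable classes partition $X$, so it suffices to show that a single class $W^s(x)$ is $\mu$-null. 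The ``in particular'' clause then follows: countably many null classes covering $X$ would force $1=\mu(X)=0$ by countable subadditivity, so there must be uncountably many.

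First I would record that a forward dynamical ball is essentially saturated in the flow direction over short time windows. Using the uniform continuity of $(r,z)\mapsto\phi_r(z)$ on $[0,\eta]\times X$, with modulus $\omega(\eta)\to 0$ as $\eta\to 0$, one checks directly that
\[
\bigcup_{r\in[0,\eta]}\phi_{-r}\big(\Ga^+_{\vep}(w)\big)\subseteq\Ga^+_{\vep+2\omega(\eta)}\big(\phi_{-\eta}(w)\big),
\]
by transporting a shadowing reparametrization $g\in Rep$ for $\phi_r(u)$ to a reparametrization $k\in Rep$ for $u$ that is merely adjusted on $[0,\eta]$, the resulting discrepancy being absorbed by $2\omega(\eta)$. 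Choosing $\eta>0$ and $\vep>0$ so small that $\vep+2\omega(\eta)\le\vep_0$, the right-hand side is $\mu$-null, and by invariance of $\mu$ so is every time-$t$ translate of such a short flow-thickening.

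Next I would decompose the class by the time at which orbits enter the $\vep$-shadowing regime, writing $W^s(x)=\bigcup_{N\in\N}S_N$, where $S_N$ consists of the $y$ admitting a reparametrization $h$ with $d(\phi_t(x),\phi_{h(t)}(y))\le\vep$ for all $t\ge N$. For $y\in S_N$, the usual shift of the reparametrization gives $\phi_{h(N)}(y)\in\Ga^+_{\vep}(\phi_N(x))$. The obstacle — and the reason the continuous-time case is genuinely harder than \cite{AM} — is that the entry time $h(N)$ is a real number depending on $y$, so $S_N$ is a priori covered only by the \emph{uncountable} union $\bigcup_{r\ge 0}\phi_{-r}(\Ga^+_{\vep}(\phi_N(x)))$. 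This is exactly where Section~\ref{sec: rep} is indispensable: taking the shadowing reparametrization in $Rep(\al)$ bounds $h(N)\in[0,KN]$ with $K=K(\al)$ uniform in $y$, so the cover becomes a \emph{finite} union of short windows $[j\eta,(j+1)\eta]$ with $0\le j\le\lceil KN/\eta\rceil$. Each window contributes, by the inclusion above together with invariance, a $\mu$-null set; hence $\mu(S_N)=0$, and summing over $N$ yields $\mu(W^s(x))=0$.

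I expect the main difficulty to be precisely this control of the real-valued entry time $h(N)$, together with the verification of the short-window saturation inclusion; both hinge on the reduction to near-identity reparametrizations in $Rep(\al)$ established in Section~\ref{sec: rep} (which in turn uses the absence of fixed points) and on the uniform continuity of the flow on a compact space. Once these two ingredients are secured, the remaining steps are a routine countable exhaustion mirroring the discrete setting, and the partition of $X$ into null stable classes immediately forces their number to be uncountable.
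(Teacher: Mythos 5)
Your argument is sound and reaches the same conclusion by the same high-level strategy as the paper: invoke Theorem~\ref{entropythm} to get positive expansiveness, cover the stable class by countably many forward dynamical balls centered along the orbit of the base point, and then tame the uncountable family of backward time-translates $\phi_{-r}$, $r\ge 0$. Where you genuinely diverge is in the last step. The paper discretizes the orbit of $x_0$ into countably many arcs $\phi_{[-\te,\te]}(y_i)$ and then reduces $\bigcup_{t\ge 0}\phi_{-t}(\Ga^+_\vep(y_i))$ to a union over \emph{rational} $t$ by perturbing the reparametrization so that the entry time becomes rational at the cost of enlarging the radius (the mechanism of Lemma~\ref{lemma_rep_rational}/Lemma~\ref{lemma_invariance_smaller_radius}). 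You instead absorb each short window of translates into a single slightly larger ball, $\bigcup_{r\in[0,\eta]}\phi_{-r}(\Ga^+_\vep(w))\subseteq\Ga^+_{\vep+2\omega(\eta)}(\phi_{-\eta}(w))$, using only uniform continuity of the flow (Lemma~\ref{lem: Bowen}) and $\phi$-invariance of $\mu$. Your device is more elementary and self-contained; the paper's reuses Section~\ref{sec: thmE} machinery. Both are correct. (Minor point: when $r=0$ the interpolating reparametrization on $[0,\eta]$ degenerates to a constant, so the surgery has to be done on a slightly longer interval to keep $k$ strictly increasing; this is routine.)

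One claim in your writeup is wrong, though fortunately not load-bearing: you assert that taking the shadowing reparametrization in $Rep(\al)$ forces $h(N)\in[0,KN]$ uniformly over $y\in S_N$. The definition of $W^{ss}$ only gives $h\in Rep$, and Lemma~\ref{lemma: reparametrization} upgrades $h$ to $Rep(\al)$ only when the $\vep$-shadowing holds on all of $\R$; for $y\in S_N$ the shadowing holds only on $[N,\infty)$, so $h\big|_{[0,N]}$, and hence the entry time $h(N)$, is completely unconstrained. Indeed the bound is false: if $y\in S_N$ has entry time $h(N)$, then $\phi_{-T}(y)$ lies in the same $S_N$ with entry time $h(N)+T$ for arbitrary $T>0$. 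This does not damage your proof, because you never needed finitely many windows: the cover $\bigcup_{r\ge 0}\phi_{-r}(\Ga^+_\vep(\phi_N(x)))$ splits into \emph{countably} many windows $[j\eta,(j+1)\eta]$, $j\in\N$, each contributing a $\mu$-null set by your saturation inclusion together with the invariance of $\mu$, and a countable union of null sets is null. So the difficulty you identified as the crux is a non-issue; the real content is the window-saturation inclusion, which you verified correctly.
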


The remainder of this text is organized as follows. In Section \ref{section: prelim}, we present the basic concepts and state known results used throughout this work. Section \ref{sec: rep} is devoted to the study of the reparametrizations used in the definition of expansive measures. In particular, we provide the reader with a proof for Theorem \ref{thm_al_vep_ball}. In Section \ref{sec: Brin-Katok}, we present a proof of the Brin-Katok local entropy formula for generalized dynamical balls and prove Theorem \ref{entropythm}. In Section \ref{sec: thmE}, we prove Theorem \ref{thm_supp_invariant_g_de_si}. Finally, in Section \ref{section_thmD} we provide the reader with a proof for Theorem  \ref{thm:classes}.

\section{Preliminaries}\label{section: prelim}

In this section, we present the main definitions used in this work and provide some previously established results on the theory of expansiveness that will be instrumental in our analysis. Hereafter, $X$ denotes a compact metrizable space and $d$ a compatible metric. 
Our results do not depend on the choice of $d$. Let $\SK(X)$ be the set of all non-empty compact subsets of $X$. We endow $\SK(X)$ with the Hausdorff metric defined for $A,B \in \SK(X)$ as
$$d_H(A,B)= \max \left\{\sup_{a \in A}d(a,B), \sup_{b \in B} d(b,A)\right\}.$$

By a \textbf{continuous flow on $X$}, we mean a continuous action of $\R$ on $X$, i.e., a continuous map $\phi\colon \R\times X \to X$ satisfying $\phi(0,x)=x$ and $\phi(t+s,x)=\phi(s,\phi(t,x))$, for every $x\in X$ and $t,s\in \R$. We denote by $\phi_t=\phi(t\cdot)$ the time-$t$ map of $\phi$. We say that $x \in X$ is a \textbf{fixed point} for $\phi$ if $\phi_t(x)=x$, for every $t\in \R$. We call $\phi$ a \textbf{regular flow}, if it is a continuous flow without fixed points. The \textbf{orbit} of a point $x \in X$ under $\phi$ is the set $$O(x)=\{\phi_t(x) \in X : t\in \R\}.$$ 
Also, the positive and negative orbits of $x$ are respectively the sets
 $$ O^+(x)=\{\phi_t(x) \in X : t\geq 0\} \textrm{ and } O^-(x)=\{\phi_t(x) \in X : t\leq 0\}.$$

The next Lemma is an elementary result from flows theory that will be used several times throughout this work.  
\begin{lemma}[\cite{BW}]\label{lem: Bowen}
    Let $\phi$ be a continuous flow on $X$. For every $\vep>0$, there is $\te>0$ such that for every $x\in X$ and every $s \in [-\te,\te]$, one has $$d(\phi_{t}(x),\phi_{t+s}(x))\leq \vep,$$
    for every $t\in \R$.
\end{lemma}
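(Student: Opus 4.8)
The plan is to reduce the assertion---whose apparent difficulty lies in the uniformity over the unbounded parameter $t \in \R$---to a statement involving only a bounded parameter, by exploiting the flow (cocycle) property. The crucial observation is that $\phi_{t+s}(x) = \phi_s(\phi_t(x))$, so if we set $y = \phi_t(x)$, then
$$d(\phi_t(x), \phi_{t+s}(x)) = d(y, \phi_s(y)).$$
As $x$ ranges over $X$ and $t$ over $\R$, the point $y = \phi_t(x)$ merely ranges over points of $X$ (indeed every point of $X$ is already attained at $t = 0$). Hence it suffices to prove the following uniform-in-$y$ statement: for every $\vep > 0$ there is $\te > 0$ such that $d(y, \phi_s(y)) \le \vep$ for all $y \in X$ and all $s \in [-\te, \te]$. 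Once this is established, the lemma follows immediately by substituting $y = \phi_t(x)$ for arbitrary $x \in X$ and $t \in \R$.

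First I would establish the reduced statement by a compactness argument. Consider the function $\psi \colon \R \times X \to \R$ defined by $\psi(s, y) = d(y, \phi_s(y))$, which is continuous because $\phi$ and $d$ are continuous, and which satisfies $\psi(0, y) = 0$ for every $y \in X$. Suppose, for contradiction, that the conclusion fails for some $\vep_0 > 0$. Then for each $n \in \N$ there exist $s_n \in [-1/n, 1/n]$ and $y_n \in X$ with $\psi(s_n, y_n) > \vep_0$. By compactness of $X$, after passing to a subsequence we may assume $y_n \to y$ for some $y \in X$, while $s_n \to 0$. Continuity of $\phi$ gives $\phi_{s_n}(y_n) \to \phi_0(y) = y$, and therefore $\psi(s_n, y_n) = d(y_n, \phi_{s_n}(y_n)) \to d(y, y) = 0$, contradicting $\psi(s_n, y_n) > \vep_0$. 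This proves the reduced statement.

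The main (and really the only) obstacle here is conceptual rather than technical: the naive attempt to invoke uniform continuity of $\phi$ directly fails because the domain $\R \times X$ is noncompact in the $t$-direction, so there is no single modulus of continuity valid for all $t$. The cocycle identity is precisely what collapses this unbounded direction, replacing control of $d(\phi_t(x), \phi_{t+s}(x))$ along an entire orbit by control of $d(y, \phi_s(y))$ at a single arbitrary base point $y$, where compactness of $X$ does apply. Alternatively, one could phrase the compactness step via the tube lemma: the open set $\psi^{-1}((-\infty, \vep))$ contains the compact slice $\{0\} \times X$, hence it contains a tube $(-\te, \te) \times X$, yielding the same bound. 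After the reduction, all remaining steps are routine.
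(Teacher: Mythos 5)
Your proof is correct, and it is the standard argument for this fact: the paper itself gives no proof, citing it directly from Bowen--Walters, and your reduction via the flow identity $\phi_{t+s}(x)=\phi_s(\phi_t(x))$ followed by a compactness (or tube-lemma) argument on $\{0\}\times X$ is exactly the expected justification. Nothing is missing; in particular you correctly identified that compactness of $X$ (assumed throughout the paper) is what makes the uniform $\te$ possible.
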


Next, we define some notation that will be pivotal in our analysis. Let us first recall the classical definition of Bowen ball for flows. The \textbf{Bowen ball} or \textbf{dynamical ball} centered at $x\in X$ with radius $\delta>0$ is defined as
$$ B^{\infty}_{\phi}(x,\vep)=\{y\in X : d(\phi_t(x),\phi_t(y))\leq\vep \enspace \forall t\in \mathbb{R}\}.$$
The idea of the Bowen ball centered at $x$ is to capture all the points that remain $\eps$-close to $x$ all the time. On the other hand, expansivity is defined in terms of reparametrizations and for this reason, the classical Bowen ball is not efficient to deal with expansive flows. 

Let us now recall some generalizations of the  Bowen ball. Let $\SC(\R)$ denote the set of continuous functions from $\R$ to itself. We will fix the notation for some subsets of $\SC(\R)$ that will constantly appear in this work. We write
$$\SC^0=\{h\in \SC(\mathbb{R}): h(0)=0\};$$
$$Rep=\{h\in \SC^0: h\textrm{ in an increasing homeomorphism}\};$$
$$Rep(\al)= \left\{h \in Rep : \left|\frac{h(s)-h(t)}{s-t} -1\right| \leq \al \enspace\forall s\neq t \right\}.$$
Observe that we have the following strict inclusion $Rep(\alpha)\subset Rep\subset \SC^0$.

Next, we recall the generalized Bowen ball considered in \cite{CarrascoMorales2014}. Let $\phi$ be a continuous flow on $X$. For each $x \in X$, $t \in \R$, $\al \in (0,1)$, and $\vep>0$ the \textbf{generalized Bowen ball} is given by
$$\tilde{\Ga}_\vep(x) = \{y \in X : \exists h \in \SC^0, \enspace d(\phi_s(x),\phi_{h(s)}(y))\leq \vep \enspace \forall s \in \R\}.$$

As observed in \cite{CarrascoMorales2014}, generalized Bowen balls are not necessarily Borel sets, which can make their analysis from a measure-theoretical perspective more challenging. To overcome this issue, we shall consider the following two classes of generalized Bowen balls that will be more suitable for our approach. Let $x\in X$, $\al \in (0,1)$, and $\vep>0$. We write
$$\Gamma_{\vep}(x)=\{y\in X : \exists h\in Rep, \enspace  d(\phi_s(x),\phi_{h(s)}(y))\leq\vep \enspace\forall s\in \mathbb{R}\}.$$
The \textbf{$(\al,\vep)$-ball} centered in $x$ is defined as
$$ \Gamma_{\vep}^\al(x)=\{y\in X:\exists h\in Rep(\al),\enspace d(\phi_s(x),\phi_{h(s)}(y))\leq\vep \enspace \forall s\in \mathbb{R}\}.$$
We observe that $\Ga^\al_\vep(x)\subseteq \Gamma_{\vep}(x)\subseteq\tilde\Gamma_{\vep}(x)$. To finish the definitions of generalized Bowen balls, we define the one-sided generalized Bowen ball. The \textbf{generalized forward dynamical ball} of radius $\vep>0$ centered in $x$ is given by
$$\Gamma^+_{\eps}(x)=\{y\in X: \exists h\in Rep, \ d(\phi_s(x),\phi_{h(s)}(y))\leq\eps \enspace \forall s\in [0,\infty)\}.$$
Note that $\Gamma_{\eps}(x)\subset \Gamma^+_{\eps}(x)$, for every $x\in X$ and every $\eps>0$.

Throughout this work we consider $X$ endowed with the Borel $\si$-algebra. We write $\SM(X)$ for the set of all Borel probability measures of $X$. We always consider $\SM(X)$ endowed with the weak*-topology. We say that a set $\SS \subseteq \SM(X)$ is \textbf{convex} if for every $\mu,\nu \in \SS$ and $s \in[0,1]$ it holds that $s\mu + (1-s)\nu \in \SS$.

Let $f \colon X \to X$ be a continuous map and $\phi$ be a continuous flow on $X$. We say that $\mu\in \SM(X)$ if \textbf{$f$-invariant} if $\mu(f^{-1}(A))= \mu(A)$ for every measurable set $A \subseteq X$. We say that $\mu \in \SM(X)$ is \textbf{$\phi$-invariant} if it is $\phi_t$-invariant for every $t \in \R$. We write $\SM_{\phi}(X)$ for the set of all $\phi$-invariant measures. An $f$-invariant (resp. $\phi$-invariant) measure $\mu \in \SM(X)$ is called \textbf{ergodic} if $\mu(A)\mu(A^c)=0$ for every $f$-invariant (resp. $\phi$-invariant) measurable set $A \subseteq X$. 
It is well known that a $\phi$-invariant ergodic measure $\mu \in \SM_\phi(X)$ is ergodic for $\phi_t$ except by at most countably many $t \in \R$. 

Given a continuous map $f \colon X \to X$, for each $x \in X$, $n \in \N$ and $\vep>0$, we define the \textbf{$(n,\vep,f)$-ball} centered in $x$ as 
$$B^n_f(x,\vep)=\{y \in X : d(f^i(x),f^i(y)) \leq \vep, \ i=0, \ldots,n\}.$$ 
The Bowen ball centered in $x$ with radius $\vep>0$ is given by
$$B^\infty_f(x,\vep) = \bigcap_{n \in \N} B^n_f(x,\vep).$$ 

Analogously, given a continuous flow $\phi$ on $X$, for each $x \in X$, $t \in \R$ with $t \geq 0$ and $\vep>0$, we define the \textbf{$(t,\vep,\phi)$-ball} centered in $x$ as 
$$B^t_\phi(x,\vep)=\{y \in X : d(\phi_s(x),\phi_s(y)) \leq \vep, \ \forall s \in [0,t]\}.$$

The entropy of an ergodic $f$-invariant measure $\mu$ can be defined via Katok's formula \cite{Katok80}. For $\vep>0$ and $\de>0$, we denote by $S(n,\de,\vep,f)$ the minimal number of $(n,\vep,f)$-balls necessary to cover a measurable set $K \subseteq X$ with $\mu(K)\geq1-\de$. The \textbf{entropy} of $\mu$ with respect to $f$ is given by
    \begin{equation*}
        h_\mu(f) = \lim_{\vep\to0}\limsup_{n\to\infty} \frac{\log(S(n,\de,\vep,f))}{n}.
    \end{equation*}
The limit above is independent of $\delta$.

\begin{definition}
Let $\phi$ be a continuous flow on $X$ and $\mu \in \SM_\phi(X)$. The entropy of $\mu$ with respect to $\phi$ is given by $$h_\mu(\phi)=h_\mu(\phi_1).$$
\end{definition}

We recall the central concept considered in this work. 
\begin{definition}
    A measure $\mu \in \SM(X)$ is called \textbf{expansive} for the continuous flow $\phi$ if there exists $\eps>0$ such that $\mu(\Gamma_{\eps}(x))=0$ for every $x\in X$. We call such $\vep$ is an \textbf{expansivity constant} for $\mu$.
\end{definition}
\begin{remark}
    We emphasize that expansive measures are not required to be invariant for the flow.
\end{remark}

\section{Reparametrizations of Regular Flows}\label{sec: rep}
Our goal in this section is to show some interesting properties of reparameterizations for regular flows. In particular, we shall see that dealing with expansive measures for regular flows, one can naturally focus on the modified Bowen balls with the reparametrizations possessing some regularity.

 As previously mentioned, by definition, for every $x \in X$, $\al \in (0,1)$ and $\vep>0$ we have $\Ga^\al_\vep(x)\subseteq \Gamma_{\vep}(x)\subseteq\tilde\Gamma_{\vep}(x)$. 
In this section, we will prove that for regular flows, given $\al \in (0,1)$, if $\vep$ is sufficiently small then $\Ga_\eps(x)\subseteq\Ga^\al_\alpha(x)$ for every $x \in X$. Combined with Lemma \ref{lemma_regular_cont_homeo}, this implies that we can define expansive measures using any of the modified Bowen balls.

Due to the equivalence of the modified dynamical balls, we will strategically work mostly with the balls with reparametrization in $Rep(\al)$. Our choice of using such balls follows from the fact that, as we will see in this section, they are closed sets in particular, they are Borel sets, which is not guaranteed for the balls with reparametrizations in $\SC^0$ or $Rep$.

\begin{lemma}[\protect{\cite[Lemma 2.3]{CarrascoMorales2014}}]\label{lemma_regular_cont_homeo}
    If $\phi$ is a regular flow on $X$, then for every $\vep>0$ there exists $\de>0$ such that $\tilde{\Ga}_\de(x) \subseteq \Ga_\vep(x)$ for every $x \in X$.
\end{lemma}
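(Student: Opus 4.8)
The inclusion $\Ga_\vep(x) \subseteq \tilde\Ga_\vep(x)$ is immediate from $Rep \subseteq \SC^0$, so the entire content lies in the reverse refinement: starting from $y \in \tilde\Ga_\de(x)$ witnessed by a merely continuous reparametrization $h \in \SC^0$ with $d(\phi_s(x),\phi_{h(s)}(y)) \le \de$ for all $s \in \R$, the plan is to produce a genuine increasing homeomorphism $g \in Rep$ for which $d(\phi_s(x),\phi_{g(s)}(y)) \le \vep$. The strategy is to show that regularity forces $h$ to be \emph{coarsely increasing and proper}, then to replace $h$ by its monotone envelope and absorb the resulting error via the time-uniform continuity of the flow.

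First I would fix $\vep$ and invoke Lemma \ref{lem: Bowen} to choose $\te>0$ such that reparametrizing time by at most $\te$ moves any point by at most $\vep/2$, i.e. $|a|\le \te$ implies $d(\phi_{t+a}(w),\phi_t(w))\le \vep/2$ for all $w,t$. This reduces the goal to finding $g \in Rep$ with $\|g-h\|_\infty \le \te$: indeed, for $\de \le \vep/2$ one then gets $d(\phi_s(x),\phi_{g(s)}(y)) \le d(\phi_s(x),\phi_{h(s)}(y)) + d(\phi_{h(s)}(y),\phi_{g(s)}(y)) \le \de + \vep/2 \le \vep$ for all $s$.

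The heart of the argument exploits that $\phi$ has no fixed points on the compact space $X$, through two consequences. The infimum of the periods of periodic orbits is a positive number $\tau_0$ (otherwise a compactness limit would produce a fixed point), which yields uniform local injectivity of time; and a regular flow admits a finite cover by flow boxes of uniformly small diameter and crossing time $\tau<\te$. Taking $\de$ small relative to the thickness of these boxes, I would track how the orbit of $x$ meets successive cross-sections: each forward crossing by $\phi_s(x)$ forces the $\de$-shadowing point $\phi_{h(s)}(y)$ to cross the corresponding section in the same direction at essentially the same parameter $s$, since the side of a section on which a point lies is stable under $\de$-perturbations. Hence the crossing count along $\phi_{h(s)}(y)$ matches that of $\phi_s(x)$, so $h$ cannot reverse by more than one section spacing; this gives bounded backward excursions $\sup_{u\le s} h(u)-h(s) \le \te$, and $h(s)\to\pm\infty$ as $s\to\pm\infty$ because $\phi_s(x)$ crosses infinitely many sections in each time direction. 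This coarse monotonicity and properness is exactly where regularity is indispensable, and it is the main obstacle of the proof.

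With these properties in hand, I would set $g_0(s)=\sup_{u\le s} h(u)$, the running maximum: it is continuous (running sup of a continuous function), non-decreasing, proper with $g_0\to\pm\infty$, and $0 \le g_0(s)-h(s)\le \te$. To upgrade $g_0$ to an element of $Rep$, add a bounded strictly increasing perturbation such as $\epsilon\arctan$ to make it strictly increasing while keeping it proper, then subtract the constant $g_0(0)\in[0,\te]$ to enforce $g(0)=0$; the total modification stays within a fixed multiple of $\te$, so after shrinking $\te$ at the outset one keeps $\|g-h\|_\infty \le \te$ with $g$ a genuine increasing homeomorphism of $\R$ fixing $0$. By the choice of $\te$ this yields $d(\phi_s(x),\phi_{g(s)}(y))\le \vep$ for all $s$, hence $y\in\Ga_\vep(x)$. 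Everything outside the cross-section step is routine triangle-inequality bookkeeping resting on Lemma \ref{lem: Bowen}.
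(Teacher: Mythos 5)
A preliminary remark: the paper offers no proof of this lemma --- it is imported verbatim from \cite{CarrascoMorales2014} --- so your attempt can only be measured against the standard argument (essentially the one going back to Bowen--Walters and Oka, on which the cited reference relies). Your route is that standard one, and its outer layers are correct and complete: the reduction via Lemma \ref{lem: Bowen} to producing $g\in Rep$ with $\|g-h\|_\infty\le\te$, and the passage from a coarsely monotone, proper $h$ to an element of $Rep$ via the running maximum $g_0(s)=\sup_{u\le s}h(u)$ plus a bounded strictly increasing perturbation and a shift, all work as you describe (note only that even defining $g_0$ requires $h(u)\to-\infty$ as $u\to-\infty$, so properness must be established before the envelope is formed).

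The entire difficulty, as you yourself acknowledge, sits in the claim that for $\de$ small every $\SC^0$-witness satisfies $\sup_{u\le s}h(u)-h(s)\le\te$ and $h(s)\to\pm\infty$, and there your argument is a sketch containing one statement that is not literally true: ``the side of a section on which a point lies is stable under $\de$-perturbations'' fails for points at distance less than $\de$ from the section, which is exactly where crossings occur. To repair it you must test sides only at parameters where $\phi_s(x)$ sits at a definite level of the flow-box coordinate, and --- more importantly --- you must rule out $h(s)$ jumping between two distinct passes of the orbit of $y$ through the same box. That is where the absence of fixed points genuinely enters: compactness plus regularity give $\de_0>0$ with $d(\phi_t(z),z)\ge\de_0$ for all $z$ and all $t$ with $t_0\le|t|\le T_0$, whence the set $\{t: d(\phi_s(x),\phi_t(y))\le 2\de\}$ has components of diameter at most $t_0$ separated by gaps of length at least $T_0$, so a continuous $h$ cannot switch components. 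With that in hand your crossing-count bookkeeping goes through (for properness, one should note that the matched crossing parameters on the $y$-orbit form a strictly increasing sequence inside a locally finite set, hence diverge). In short: correct strategy and correct reductions, but the step that carries all the weight is asserted rather than proved.
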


One of the advantages of considering reparametrizations in $Rep(\al)$ is the fact that we can ensure convergence of reparametrizations in some sense. More precisely, we have the following lemma due to Komuro \cite{Ko}. 

\begin{lemma}[\protect{\cite[Lemma 2.2]{Ko}}]\label{lemma_unif_convergence_rep}
Let $\{h_n\}_{n=1}^\infty$ be a sequence of reparametrizations in $Rep(\al)$. There exists $h\in Rep(\al)$ such that for every $N >0$ there is an increasing sequence $\{n_k\}_{k=0}^\infty \subset \N$ such that 
    $$h_{n_k}\big|_{[-N,N]} \unif h\big|_{[-N,N]}.$$
\end{lemma}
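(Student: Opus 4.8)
The plan is to recognize Lemma~\ref{lemma_unif_convergence_rep} as an Arzel\`a--Ascoli statement. The only genuine content is that the defining inequality of $Rep(\al)$ forces equicontinuity together with a uniform bound on compact sets, and that these estimates survive passage to the limit so that the limiting map is again an element of $Rep(\al)$.

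First I would translate the defining condition into a two-sided bound on increments. If $h\in Rep(\al)$ and $s>t$, then $\left|\frac{h(s)-h(t)}{s-t}-1\right|\le\al$ rearranges to
$$(1-\al)(s-t)\le h(s)-h(t)\le(1+\al)(s-t).$$
The right-hand inequality shows that every $h_n$ is Lipschitz with constant $1+\al$, so the family $\{h_n\}$ is equicontinuous; combined with $h_n(0)=0$ it gives $|h_n(t)|\le(1+\al)|t|\le(1+\al)N$ for $t\in[-N,N]$, so the family is uniformly bounded on each compact interval. Applying Arzel\`a--Ascoli on $[-N,N]$ for each $N\in\N$ and running the usual diagonal argument, I would extract a \emph{single} subsequence $\{h_{n_k}\}$ converging uniformly on every compact interval to a continuous limit $h\colon\R\to\R$. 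This one subsequence witnesses the conclusion for every $N$ simultaneously, which is in fact more than the statement requires.

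It then remains to check that $h\in Rep(\al)$. Pointwise convergence immediately gives $h(0)=0$, and passing to the limit in the two-sided bound above yields $(1-\al)(s-t)\le h(s)-h(t)\le(1+\al)(s-t)$ for $s>t$; dividing by $s-t>0$ recovers $\left|\frac{h(s)-h(t)}{s-t}-1\right|\le\al$. The lower bound, which is positive because $\al\in(0,1)$, makes $h$ strictly increasing, and since $h(t)\to\pm\infty$ as $t\to\pm\infty$ (again from the lower bound), $h$ is a continuous strictly monotone surjection of $\R$, hence an increasing homeomorphism.

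The main obstacle, mild as it is, lies in this last step: one must verify that the limit is genuinely a member of $Rep(\al)$ rather than merely a monotone Lipschitz function. Surjectivity (properness) of $h$ is precisely where the lower Lipschitz bound $1-\al>0$ is indispensable, and it is exactly what separates $Rep(\al)$ from an arbitrary equicontinuous, uniformly bounded family, for which the Arzel\`a--Ascoli limit need not be invertible.
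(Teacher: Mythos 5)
Your proof is correct. The paper does not prove this lemma at all---it is imported verbatim from Komuro \cite[Lemma 2.2]{Ko}---and your Arzel\`a--Ascoli argument is precisely the standard proof of that result: the two-sided increment bound $(1-\al)(s-t)\le h(s)-h(t)\le(1+\al)(s-t)$ gives equicontinuity and uniform boundedness on compacta, a diagonal extraction yields a single subsequence converging uniformly on every $[-N,N]$ (strictly stronger than the stated conclusion, which permits the subsequence to depend on $N$), and the same bound passes to the limit to show $h\in Rep(\al)$, with the lower constant $1-\al>0$ guaranteeing strict monotonicity and properness, hence that $h$ is an increasing homeomorphism of $\R$.
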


Another advantage of working with $Rep(\al)$ is that it is complete with respect to the metric defined for $g,h \in Rep(\al)$ as 
$$D(g, h) = \sum_{n=1}^\infty \frac{1}{2^n} \cdot \frac{\sup_{s \in [-n,n]} |g(s) - h(s)|}{1 + \sup_{s \in [-n,n]} |g(s) - h(s)|}.$$

As a consequence, in Lemma \ref{lemma_convergence_dyn_balls} we obtain convergence of $(\al,\vep)$-balls.

\begin{lemma}\label{lemma_convergence_dyn_balls}
    Let $\phi$ be a regular flow on $X$, $\al \in (0,1)$, and $\vep>0$. If $\{x_k\}_{k=1}^\infty \subset X$ is a sequence converging to $x \in X$, and $\{y_k\}_{k=1}^\infty$ is a sequence converging to $y \in X$ with $y_k \in \Ga_\vep^\al(x_k)$ for every $k \in \N$, then $y \in \Ga_\vep^\al(x)$. In particular, $\Ga^\al_\vep(x)$ is closed for every $x \in X$.
\end{lemma}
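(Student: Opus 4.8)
Unfolding the hypothesis, for each $k \in \N$ the condition $y_k \in \Ga_\vep^\al(x_k)$ supplies a reparametrization $h_k \in Rep(\al)$ with
$$d(\phi_s(x_k), \phi_{h_k(s)}(y_k)) \leq \vep \qquad \forall\, s \in \R.$$
The goal is to produce a single $h \in Rep(\al)$ witnessing $y \in \Ga_\vep^\al(x)$, and the natural candidate is a limit of the $h_k$. This is precisely where the choice of $Rep(\al)$ (rather than $Rep$ or $\SC^0$) pays off: I would apply Komuro's Lemma~\ref{lemma_unif_convergence_rep} to the sequence $\{h_k\}_{k=1}^\infty \subset Rep(\al)$ to obtain a limit reparametrization $h \in Rep(\al)$ with the property that for every $N>0$ there is an increasing subsequence along which $h_{n_k}|_{[-N,N]} \unif h|_{[-N,N]}$. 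The key point is that $h$ lands back in $Rep(\al)$, so it is a genuine increasing homeomorphism fixing $0$ and is eligible as a reparametrization.

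Next I would verify the defining inequality for $h$ pointwise. Fix an arbitrary $s \in \R$ and choose any $N > |s|$. Komuro's Lemma gives a subsequence (depending on $N$, hence on $s$) along which $h_{n_k} \to h$ uniformly on $[-N,N]$; in particular $h_{n_k}(s) \to h(s)$. Combining this with $x_{n_k} \to x$ and $y_{n_k} \to y$ and invoking joint continuity of the flow $\phi\colon \R \times X \to X$, I get $\phi_s(x_{n_k}) \to \phi_s(x)$ and $\phi_{h_{n_k}(s)}(y_{n_k}) = \phi(h_{n_k}(s), y_{n_k}) \to \phi(h(s), y) = \phi_{h(s)}(y)$. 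Passing to the limit in $d(\phi_s(x_{n_k}), \phi_{h_{n_k}(s)}(y_{n_k})) \leq \vep$ using continuity of $d$ yields
$$d(\phi_s(x), \phi_{h(s)}(y)) \leq \vep.$$
Since $s \in \R$ was arbitrary and $h$ is fixed (independent of $s$), this establishes $y \in \Ga_\vep^\al(x)$.

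The closedness statement then follows immediately by specializing to the constant sequence $x_k = x$: any sequence $y_k \in \Ga_\vep^\al(x)$ converging to $y$ forces $y \in \Ga_\vep^\al(x)$, so $\Ga_\vep^\al(x)$ contains all its limit points and is closed in the compact metric space $X$.

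\textbf{Main obstacle.} The delicate point is that Komuro's Lemma furnishes only subsequential uniform convergence on each compact interval, with the subsequence depending on the interval $[-N,N]$; one does not get a single subsequence along which $h_{n_k} \to h$ uniformly on all of $\R$. I expect this to be the step requiring the most care, but it is harmless here because the inequality is verified \emph{separately at each fixed $s$}: for the purpose of checking the bound at $s$ I only need convergence on one interval $[-N,N]$ containing $s$, while the limit $h$ itself is fixed once and for all by Lemma~\ref{lemma_unif_convergence_rep}. Thus the $N$-dependence of the subsequence never interferes with the conclusion.
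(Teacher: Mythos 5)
Your proof is correct, and it reaches the conclusion by a more direct route than the paper's. Both arguments hinge on the same key tool, Komuro's Lemma \ref{lemma_unif_convergence_rep}, which supplies a single candidate $h \in Rep(\al)$; the difference lies in how the defining inequality for $h$ is verified. The paper introduces auxiliary sets $\SA(\xi,n) = \{h \in Rep(\al) : d(\phi_s(x),\phi_{h(s)}(y)) \le \vep+\xi \ \forall s \in [-n,n]\}$, proves each is closed and non-empty (the non-emptiness step being essentially your Komuro argument, run with a three-term triangle inequality that costs an extra $\xi$), and then extracts an element of $\bigcap_{\xi>0}\bigcap_{n}\SA(\xi,n)$ via a nested-intersection argument in the metric space $Rep(\al)$. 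You instead fix $s$, pick $N>|s|$, and pass to the limit pointwise along the $N$-dependent subsequence using joint continuity of $\phi$, obtaining the sharp bound $\vep$ in one step; your explicit remark that the subsequence may depend on $N$ while $h$ does not is exactly the point that makes this legitimate, since the same fixed $h$ then witnesses $y\in\Ga_\vep^\al(x)$. Your version is shorter and bypasses both the $\vep+\xi$ approximation and the closedness/completeness machinery for $Rep(\al)$ (machinery whose closedness output is not reused elsewhere in the paper), so nothing is lost by taking your route.
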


\begin{proof}
    For each $\xi>0$ and $n \in \N$ we define $$\SA(\xi,n) = \{h \in Rep(\al): d(\phi_s(x), \phi_{h(s)}(y)) \leq \vep + \xi, \forall s \in [-n,n]\};$$ $$\SA(\xi) = \bigcap_{n \in \N} \SA(\xi,n).$$
    
    Note that if $h \in \bigcap_{\xi>0} \SA(\xi)$, then $d(\phi_s(x),\phi_{h(s)}( y)) \leq \vep$ for every $s \in \R$. Hence, it is enough to prove that $\bigcap_{\xi>0} \SA(\xi) \neq \emptyset$. 
    
    Observe that $\SA(\xi) \subseteq \SA(\xi')$ whenever $\xi<\xi'$. Thus, if $\SA(\xi,n)$ is closed and non-empty for every $\xi >0$ and $n \in \N$, then $\SA(\xi)$ is closed and non-empty for every $\xi>0$, and consequently, $\bigcap_{\xi>0} \SA(\xi) \neq \emptyset$. 

    Fix $\xi>0$ and $n \in \N$. First, we prove that $\SA(\xi,n)$ is closed by showing that $Rep (\al) \setminus \SA(\xi,n)$ is open. Fix $h \in Rep(\al) \setminus \SA(\xi,n)$. Then there exists $s_0 \in [-n,n]$ such that $d(\phi_{s_0}(x), \phi_{h({s_0})}( y)) > \vep + \xi$. Let $0 < \eta < d(\phi_{s_0}(x), \phi_{h(s_0)}(y)) -\vep - \xi$. By Lemma \ref{lem: Bowen}, there exists $\De>0$ such that $d(\phi_r(z), \phi_t(z))< \eta$ for every $z \in X$ and $r, t \in \R$ satisfying $|r-t|<\De$.
    
    If $g \in Rep(\al)$ and $D(g,h)$ is sufficiently small, then for every $s \in [-n,n]$ we have $|g(s)-h(s)|<\De$, which in turn implies $d(\phi_{h(s)}(y),\phi_{g(s)}(y))<\eta$. As a consequence, by the triangle inequality, we conclude that $d(\phi_{s_0}(x),\phi_{g(s_0)}(y)) > \vep + \xi$ which completes the proof that $Rep (\al) \setminus \SA(\xi,n)$ is open.

    Now, let us prove that $\SA(\xi,n) \neq \emptyset$. 
    Since $y_k \in \Ga_\vep^\al(x_k)$ for every $k \in \N$, there exists a sequence $\{h_k\}_{k = 0}^\infty \subset Rep(\al)$ such that $d(\phi_s(x_k), \phi_{h_k(s)}(y_k))\leq \vep$ for every $s \in \R$ and $k \in \N$. Let $h \in Rep(\al)$ be provided by Lemma \ref{lemma_unif_convergence_rep}. In particular, up to passing to a subsequence, we can assume that $$h_{k}\big|_{[-n,n]} \unif h\big|_{[-n,n]}.$$
    Hence, there is $k_0 \in \N$ such that for every $k\geq k_0$ one has $d(\phi_{h_k(s)}(z), \phi_{h(s)}(z))<\frac{\xi}{3}$ for every $s\in [-n,n]$, and $z \in X$. 
    
    By the continuity of the flow, since $\{x_k\}_{k=1}^\infty$ converges to $x$ and $\{y_k\}_{k=1}^\infty$ converges to $y$, there is $k_1 \geq k_0$ such that $d(\phi_s(x_k), \phi_s(x))< \frac{\xi}{3}$ and $d(\phi_{h(s)}(y_k), \phi_{h(s)}(y)) < \frac{\xi}{3}$ for every $k \geq k_1$ and $s \in [-n,n]$. 
    
    Therefore, by the triangle inequality, we conclude that for every $k \geq k_1$ and $s \in [-n,n]$ it holds that
    \begin{multline*}
        d(\phi_s(x), \phi_{h(s)}(y)) \leq d(\phi_s(x), \phi_s(x_k)) + d(\phi_s(x_k), \phi_{h_k(s)}(y_k)) \\+ d(\phi_{h_k(s)}(y_k), \phi_{h(s)}(y_k)) + d(\phi_{h(s)}(y_k),\phi_{h(s)}(y)) 
        \\ \leq \frac{\xi}{3} + \vep + \frac{\xi}{3} +\frac{\xi}{3} \leq \vep+\xi. 
    \end{multline*}

    Lastly, to see that $\Ga^\al_\vep(x)$ is closed is enough to consider $\{x_k\}_{k=1}^\infty$ as the constant sequence in $x$. This implies that if $y \in X$ is the limit of a sequence $\{y_k\}_{k=1}^\infty \subseteq \Ga^\al_\vep(x)$, then $y \in \Ga_\vep^\al(x)$.
\end{proof}

The next lemma is a combination of \cite[Lemma 3.3]{Ko} and \cite[Lemma 3.4]{Ko} adapted to our notation. 

\begin{lemma}\label{lemma_compilation_komuro}
    If $\phi$ is a regular flow on $X$, then there exists $T_0>0$ such that for every $\al \in (0,1)$ and $T \in (0,T_0)$, there exists $\vep_0>0$ such that for $x,y \in X$, if $d(\phi_s(x),\phi_{h(s)}(y))\leq \vep$ for every $s \in [0,T]$ and some $h \in Rep$ and $\vep \in (0,\vep_0]$, then $\left|\frac{h(T)}{T} -1\right|\leq\al$.
\end{lemma}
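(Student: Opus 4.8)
The plan is to reduce the two-orbit tracking hypothesis to a statement about a single orbit, and then to exploit the absence of fixed points to pin down the reparametrization. The engine of the whole argument is the following consequence of compactness together with regularity: a fixed-point-free flow on a compact metric space has a positive lower bound $p_0$ on the periods of its periodic orbits, for otherwise a sequence of ever shorter periodic orbits would accumulate, by continuity of $\phi$, on a fixed point. I would fix $T_0<p_0/3$. On time windows of length smaller than $p_0$ the flow is then \emph{uniformly injective in time}: a standard compactness argument shows that for every $\rho>0$ there is $\de>0$ such that, for all $z\in X$ and $a,b\in[0,2T_0]$, $d(\phi_a(z),\phi_b(z))\le\de$ forces $|a-b|\le\rho$ (a limit of violating triples would produce a periodic point of period at most $2T_0<p_0$). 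Sharpening this, the local cross-section structure of a regular flow, made uniform over $X$ by covering $X$ with finitely many flow boxes, gives the two-sided comparability between elapsed flow-time and displacement on which the slope estimate ultimately rests; this uniform comparability is the content extracted in \cite[Lemma 3.3]{Ko}.

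Next, I would fix $\al\in(0,1)$ and $T\in(0,T_0)$ and suppose $d(\phi_s(x),\phi_{h(s)}(y))\le\vep$ for all $s\in[0,T]$ with $h\in Rep$ and $\vep\le\vep_0$. Taking $s=0$ yields $d(x,y)\le\vep$, so by uniform continuity of $\phi$ on $X\times[0,2T_0]$ one has $d(\phi_s(x),\phi_s(y))\le\eta(\vep)$ with $\eta(\vep)\to0$; combining this with the hypothesis and the triangle inequality gives $d(\phi_s(y),\phi_{h(s)}(y))\le\vep+\eta(\vep)$ whenever $h(s)$ lies in the window. I would then run a bootstrap: the uniform injectivity above controls $|h(s)-s|$ on the part of $[0,T]$ where $h(s)\le 2T_0$, and since $T<T_0$ this forces $h([0,T])\subset[0,2T_0)$, so the estimate propagates to all of $[0,T]$. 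Feeding $d(\phi_s(y),\phi_{h(s)}(y))\le\vep+\eta(\vep)$ into the comparability of time and displacement finally converts this control into the multiplicative bound $\left|\tfrac{h(s)}{s}-1\right|\le\al$ once $\vep_0$ is taken small enough; this assembling step corresponds to \cite[Lemma 3.4]{Ko}. The convergence of reparametrizations in Lemma \ref{lemma_unif_convergence_rep} and the continuity Lemma \ref{lem: Bowen} are the only auxiliary tools needed to make the triangle-inequality estimates uniform.

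The main obstacle is the behaviour near the initial instant, that is, for $s$ close to $0$. Away from $0$ the slope bound is essentially free: the injectivity argument gives an \emph{additive} bound $|h(s)-s|\le\rho$, and for $s$ bounded below (say $s\ge\rho/\al$) this already yields $\left|\tfrac{h(s)}{s}-1\right|\le\al$. For small $s$, however, the orbit segments $\{\phi_r(x):r\in[0,s]\}$ and $\{\phi_{h(r)}(y):r\in[0,s]\}$ shrink to the base point, the tube condition degenerates, and an additive bound no longer suffices to control the ratio $h(s)/s$. Here one genuinely needs the scale-free comparability of flow-time with displacement furnished by the local cross-section structure, held uniform in the base point $x$; securing this uniformity over all of $X$ through finitely many sections, and controlling the comparison on arbitrarily short windows, is the delicate technical point and is exactly where the no-fixed-point hypothesis enters in full strength. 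This is the reason the statement is assembled from the two lemmas of \cite{Ko} rather than proved by the elementary tracking estimates alone.
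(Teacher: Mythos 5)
The paper offers no proof of this lemma to compare against: it is imported wholesale as ``a combination of [Ko, Lemma 3.3] and [Ko, Lemma 3.4]''. Your reconstruction of the mechanism behind those lemmas --- a positive lower bound on periods of periodic orbits, uniform injectivity of $t\mapsto\phi_t(z)$ on time windows shorter than that bound, and the triangle-inequality reduction from the pair $(x,y)$ to the single orbit of $y$ --- is sound, and it does deliver a uniform \emph{additive} estimate $|h(s)-s|\le\rho(\vep)$ on all of $[0,T]$ with $\rho(\vep)\to0$ as $\vep\to0$. That already gives $|h(s)/s-1|\le\al$ for $s\ge\rho(\vep)/\al$, which is the genuine content of Komuro's lemmas.

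The gap is precisely at the step you flag as delicate, and it cannot be closed: no ``scale-free comparability of flow-time with displacement'' exists below the shadowing scale, because the conclusion as stated fails for small $s$. Take $y=x$, let $\te>0$ be given by Lemma \ref{lem: Bowen} for $\vep=\vep_0$, and define $h\in Rep$ by $h(s)=s$ for $s\le0$, $h(s)=2s$ on $[0,\si]$ and $h(s)=s+\si$ for $s\ge\si$, where $0<\si\le\min\{\te,T\}/2$. Then $|h(s)-s|\le\te$ for all $s$, so $d(\phi_s(x),\phi_{h(s)}(x))\le\vep_0$ on $[0,T]$, yet $|h(\si)/\si-1|=1>\al$. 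The hypothesis only localizes $\phi_{h(s)}(y)$ to within $\vep$ of $\phi_s(x)$, hence pins down $h(s)$ only up to an additive error of order $\vep$ divided by the minimal speed of the flow --- an absolute quantity that does not shrink with $s$; uniform cross-sections give exactly this additive control and nothing more. The provable form of the statement asserts the ratio bound only for $s$ bounded below by a threshold depending on $\vep$ (equivalently, $|h(s)-s|\le\al\max\{s,T_1\}$), and this weaker form is what must then be fed, with extra care on the final short interval $[t_n,r]$, into the chaining argument of Lemma \ref{lemma: reparametrization}. So your additive half is correct and matches the cited source; the multiplicative claim down to $s=0$ is not recoverable by any refinement of your argument.
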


\begin{lemma}\label{lemma: reparametrization}
    Let $\phi$ be a regular flow on $X$. For every $\al \in (0,1)$ there exists $\vep_0>0$ such that if $x,y\in X$ and $h\in Rep$ satisfy $d(\phi_s(x),\phi_{h(s)}(y))\leq\vep$ for every $s\in \R$ and some $\vep \in (0,\vep_0]$, then there is $g\in Rep(\al)$ such that $d(\phi_s(x),\phi_{g(s)}(y))\leq\alpha$ for every $s \in \R$.
\end{lemma}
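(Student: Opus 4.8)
The plan is to upgrade the pointwise estimate of Lemma \ref{lemma_compilation_komuro} to the uniform affine bound defining $Rep(\al)$, exploiting the time-homogeneity of the flow to move the base point around and then a telescoping argument to pass from short increments to arbitrary ones. Fix $\al \in (0,1)$, let $T_0>0$ be the constant from Lemma \ref{lemma_compilation_komuro}, fix any $T \in (0,T_0)$, and let $\vep_0>0$ be the corresponding constant; crucially this $\vep_0$ depends only on $\al$ and $T$, not on the points. Now assume $x,y \in X$ and $h \in Rep$ satisfy $d(\phi_s(x),\phi_{h(s)}(y)) \leq \vep$ for all $s \in \R$, with $\vep \in (0,\vep_0]$.

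The key step is the shift trick. For each $t \in \R$ I would set $x_t = \phi_t(x)$, $y_t = \phi_{h(t)}(y)$, and $h_t(u) = h(t+u)-h(t)$. Since $h$ is an increasing homeomorphism and $h_t(0)=0$, we have $h_t \in Rep$, and the flow property gives
$$d(\phi_u(x_t),\phi_{h_t(u)}(y_t)) = d(\phi_{t+u}(x),\phi_{h(t+u)}(y)) \leq \vep$$
for all $u \in \R$, in particular for $u \in [0,T]$. Applying Lemma \ref{lemma_compilation_komuro} to the triple $x_t,y_t,h_t$ yields $\left|\frac{h_t(u)}{u}-1\right| \leq \al$ for all $u \in (0,T]$, that is, $(1-\al)u \leq h(t+u)-h(t) \leq (1+\al)u$. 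Letting $t$ range over all of $\R$, this is exactly
$$(1-\al)(b-a) \leq h(b)-h(a) \leq (1+\al)(b-a) \quad\text{whenever } 0 < b-a \leq T.$$

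To finish, for arbitrary $a<b$ I would partition $[a,b]$ as $a = s_0 < s_1 < \cdots < s_N = b$ with each $s_{i+1}-s_i \leq T$, and telescope using $h(b)-h(a) = \sum_{i=0}^{N-1}\bigl(h(s_{i+1})-h(s_i)\bigr)$; since each summand obeys the short-increment bound, summing the inequalities gives $(1-\al)(b-a) \leq h(b)-h(a) \leq (1+\al)(b-a)$, hence $\left|\frac{h(b)-h(a)}{b-a}-1\right| \leq \al$. As $a<b$ were arbitrary, $h \in Rep(\al)$.

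The only delicate point is that the shift trick requires $\vep_0$ to be chosen uniformly in the base point $t$, since $t$ ranges over all of $\R$; this is exactly what Lemma \ref{lemma_compilation_komuro} supplies, as its constant is uniform over all pairs of points of $X$ and $x_t,y_t \in X$ for every $t$. The telescoping step is then routine, because the defining inequalities of $Rep(\al)$ are additive over adjacent increments. I do not expect any genuine obstacle beyond correctly bookkeeping these two observations.
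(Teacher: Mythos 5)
Your proposal is correct and is essentially the same argument as the paper's: the "shift trick" with $h_t(u)=h(t+u)-h(t)$ is precisely the paper's construction of the reparametrizations $h_k$ based at the partition points $t_k = t+kT$, and the telescoping over increments of length at most $T$ matches the paper's summation of the bounds $|h(t_{k+1})-h(t_k)-(t_{k+1}-t_k)|\leq \al(t_{k+1}-t_k)$. No gaps.
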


\begin{proof}
    Fix $\al \in (0,1)$ and let $T_0>0$ be given by Lemma \ref{lemma_compilation_komuro}. By the continuity of $\phi$, we can choose $T \in (0,T_0)$ so that $d(x,\phi_s(x))\leq \frac{\alpha}{3}$, for every $s\in [0,2T]$. Now, fix $0<\eps_0\leq \frac{\alpha}{3}$ given by Lemma \ref{lemma_compilation_komuro}. Let $x,y \in X$, let $h \in Rep$ be such that $d(\phi_s(x),\phi_{h(s)}(y))\leq \vep$ for every $s \in \R$ with $\vep \in (0,\vep_0]$. Consider the sequence $\{T_k\}_{k\in \Z}$, where $T_k=kT$. For each $k \in \Z$ we define $g_k \colon \R \to \R$ as $g_k(s)=h(T_k+s)-h(T_k)$. Note that each $g_k \in Rep$ and for every $s \in [0,T]$ it holds that
    $$d(\phi_s(\phi_{T_k}(x)), \phi_{g_k(s)}(\phi_{h(T_k)}(y))) = d(\phi_{s+T_k}(x), \phi_{h(s+T_k)}(y))\leq \vep.$$
    As a consequence, by Lemma \ref{lemma_compilation_komuro}, for every $k \in \Z$ one has $$\left|\frac{g_k(T)}{T}-1 \right|\leq\alpha.$$

We define the desired reparameterization $g \colon \R \to \R$ as 
$$
g(s)=
\begin{cases}
g_k(T), &\textrm{ if } s=T_k \\
\dfrac{g_{k+1}(T)}{T}(s-T_{k})+g_k(T), &\textrm{ if } s\in [T_k,T_{k+1}].
\end{cases}
$$

It is easy to see that $g \in Rep$. In order to prove that $g \in Rep(\al)$, choose two real numbers $t\neq r$ and assume without loss of generality that $t>r$. Then, there are $k\in \Z$ and $\ell \in \N_0$ such that $r \in [T_k,T_{k+1}]$ and $t \in [T_{k+\ell},T_{k+\ell+1}]$. Hence, we obtain
\begin{align*}
\left| g(t) - g(r) - (t - r) \right|
&= \left| g(t) - g(T_{k+\ell}) - (t - T_{k+\ell})
   + \left( \sum_{i=1}^\ell g(T_{k+i}) - g(T_{k+i-1}) \right) \right. \\
&\quad \left. - \left( \sum_{i=1}^\ell \left(T_{k+i} - T_{k+i-1}\right)\right)
   + g(T_k) - g(r) - (T_k - r) \right| \\
&\leq \left| g(t) - g(T_{k+\ell}) - (t - T_{k+\ell}) \right| \\
&\quad + \sum_{i=1}^\ell \left| g(T_{k+i}) - g(T_{k+i-1}) - (T_{k+i} - T_{k+i-1}) \right| \\
&\quad + \left| g(T_k) - g(r) - (T_k - r) \right| \\
&\leq \alpha \left( t - T_{k+\ell} + T_k - r
   + \sum_{i=1}^\ell (T_{k+i} - T_{k+i-1}) \right) \\
&= \alpha (t - r).
\end{align*}

It remains to prove that $d(\phi_s(x),\phi_{g(s)}(y))\leq\alpha$ for every $s \in \R$. Fix $s\in \R$ and let $k \in \Z$ be such that $s\in [T_k,T_{k+1}]$.
It follows from the definition of $g$ that $$|g(s)-g_k(T_k)|=\left|\dfrac{g_{k+1}(T)}{T}(s-T_{k})\right|\leq(1+\al)T\leq2T.$$
Therefore, 
\begin{align*}
    d(\phi_s(x),\phi_{g(s)}(y))&\leq d(\phi_s(x),\phi_{T_k}(x))+d(\phi_{T_k}(x),\phi_{h(T_k)}(y)) + d(\phi_{h(T_k)}(y),\phi_{g(s)}(y))\\&\leq 3\eps\leq\alpha,
\end{align*}
which completes the proof.
\end{proof}

\begin{corollary}\label{cor_equal_balls}
    If $\phi$ is a regular flow on $X$, then for every $\vep \in (0,1)$ there exists $\de_0>0$ such that for every $\de \in (0,\de_0]$ and $x \in X$ we have $\Ga_\de(x) \subseteq \Ga_\vep^\vep(x)$.
\end{corollary}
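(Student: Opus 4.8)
The plan is to derive this corollary directly from Lemma~\ref{lemma: reparametrization}, since the statement is essentially a set-theoretic repackaging of that lemma. First I would fix $\al \in (0,1)$ and let $\vep_0>0$ be the constant furnished by Lemma~\ref{lemma: reparametrization} for this $\al$. I then fix $\vep \in (0,\vep_0]$ and $x \in X$ and establish the two inclusions separately, noting that the choice of $\vep_0$ is independent of $x$.

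The inclusion $\Ga_\vep^\al(x) \subseteq \Ga_\vep(x)$ requires no work and in fact holds for every $\vep>0$: it is immediate from the strict inclusion $Rep(\al)\subset Rep$ recorded in Section~\ref{section: prelim}. Indeed, if $y \in \Ga_\vep^\al(x)$ then there is a witnessing reparametrization $h \in Rep(\al)$ with $d(\phi_s(x),\phi_{h(s)}(y))\leq \vep$ for all $s \in \R$, and since $h$ also lies in $Rep$, the same $h$ witnesses $y \in \Ga_\vep(x)$.

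For the reverse inclusion $\Ga_\vep(x)\subseteq \Ga_\vep^\al(x)$, I would take an arbitrary $y \in \Ga_\vep(x)$, so that there exists $h \in Rep$ with $d(\phi_s(x),\phi_{h(s)}(y))\leq \vep$ for every $s \in \R$. Because $\vep \leq \vep_0$, Lemma~\ref{lemma: reparametrization} upgrades this reparametrization to $h \in Rep(\al)$, and this same $h$ then witnesses $y \in \Ga_\vep^\al(x)$. Combining the two inclusions yields the claimed equality $\Ga_\vep(x)=\Ga_\vep^\al(x)$ for all $\vep \in (0,\vep_0]$ and all $x \in X$.

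I do not anticipate any genuine obstacle here: all of the analytic difficulty—deducing the bounded-distortion condition defining $Rep(\al)$ from the mere hypothesis that the two reparametrized orbits remain $\vep$-close for small $\vep$—has already been absorbed into Lemma~\ref{lemma: reparametrization}, which in turn rests on Komuro's estimates packaged in Lemma~\ref{lemma_compilation_komuro}. The only point meriting a word of care is the quantifier structure: the corollary demands a single $\vep_0$, depending on $\al$ but uniform over $x \in X$, and this uniformity is exactly what Lemma~\ref{lemma: reparametrization} delivers, so nothing further is needed.
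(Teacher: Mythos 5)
Your proof is correct and follows exactly the route the paper intends: the corollary is stated without proof precisely because it is the immediate set-theoretic consequence of Lemma~\ref{lemma: reparametrization} (one inclusion from $Rep(\al)\subset Rep$, the other from the lemma upgrading any witnessing $h\in Rep$ to $Rep(\al)$ when $\vep\leq\vep_0$). Nothing is missing, and your remark about the uniformity of $\vep_0$ in $x$ is the right point to check.
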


\begin{proof}[Proof of Theorem \ref{thm_al_vep_ball}]
    Trivially we have \eqref{thmA_item1}$\Rightarrow$\eqref{thmA_item2}$\Rightarrow$\eqref{thmA_item3}. We will complete the proof by showing \eqref{thmA_item3}$\Rightarrow$\eqref{thmA_item1}.  Suppose that there is $\vep \in (0,1)$  such that $\mu(\Ga^\vep_\vep(x))=0$ for $\mu$-almost every $x\in X$ but $\mu$ is not expansive. 
    
    Let $M_{\vep} \subseteq X$ be a Borel set with $\mu(M_{\vep})=1$ such that $\mu(\Ga^\vep_\vep(x))=0$ for every $x \in M_\vep$.
    By Lemma \ref{lemma_regular_cont_homeo}, it suffices to prove that there exists $\de>0$ such that for every $x \in X$ and every Borel set $B \subseteq \Ga_\de(x)$ it holds that $\mu(B)=0$. 
    
    Let $\de_0>0$ be given by Corollary \ref{cor_equal_balls}. We claim that every $\de \in (0,\de_0]$ satisfies the desired property. 
    Suppose that there exists $x_0 \in X$, $\de \in (0,\de_0]$, and some Borel set $B \subseteq \Ga_\de(x_0)$ such that $\mu(B)>0$. Then $B \cap M_\vep \neq \emptyset$, and conquently, there exists $y_0\in \Ga_{\de/2}(x_0)$ such that $\mu(\Ga^\vep_\vep(y_0))=0$. 

Let $z\in\Ga_{\de/2}(x_0)$ and let $h\in Rep$ be such that for every  $t\in\mathbb{R}$ it holds that
\begin{align*}
    d(\phi_t(x_0),\phi_{h(t)}(z))\leq \frac{\de}{2}.
\end{align*}
Since $y_0\in \Ga_{\de/2}(x_0)$, there exists $g\in Rep$ such that for every  $t\in\mathbb{R}$ it holds that
\begin{align*}
    d(\phi_t(x_0),\phi_{g(t)}(y_0))\leq \frac{\de}{2}.
\end{align*}
Hence, for every $t\in \mathbb{R}$ we have
\begin{align*}
    d(\phi_{g(t)}(y_0),\phi_{h(t)}(z))\leq d(\phi_t(x_0),\phi_{h(t)}(z))+
    d(\phi_t(x_0),\phi_{g(t)}(y_0))\leq\de.
\end{align*}

Letting $s = g(t)$, we obtain that for every $s\in\mathbb{R}$ it holds that
\begin{align*}
    d(\phi_s(y_0),\phi_{h\circ g^{-1}(s)}(z))\leq \de.
\end{align*}
Since $h\circ g^{-1}\in Rep$, it follows that $z\in \Ga_{\de}(y_0)$. Therefore, $B \subseteq \Ga_{\de/2}(x_0) \subseteq \Ga^\vep_\vep(y_0)$ and $0<\mu(B)\leq \mu(\Ga^\vep_\vep(y_0))=0$, which is a contradiction.
\end{proof}

Given $x \in X$, $\al \in (0,1)$ and $\vep>0$, we can also define the generalized forward $(\al,\vep)$-ball with reparametrization in $Rep(\al)$ as
$$\Gamma^{\al+}_{\eps}(x)=\{y\in X: \exists h\in Rep(\al), \ d(\phi_s(x),\phi_{h(s)}(y))\leq\eps \enspace \forall s\in [0,\infty)\}.$$
We observe that using the same arguments we obtain a version of Theorem \ref{thm_al_vep_ball} for generalized forward balls as follows.

\begin{theorem}\label{thmA_forward}
        Let $\phi$ be a regular flow on $X$ and $\mu \in \SM(X)$. Then the following statements are equivalent:
        \begin{enumerate}
        \item $\mu$ is positively expansive;
        \item  There exists $\vep>0$ such that $\mu(\Ga^{\eps+}_\vep(x))=0$ for every $x \in X$;
        \item  There exists $\vep>0$ such that $\mu(\Ga^{\eps+}_\vep(x))=0$ for $\mu$-almost every $x \in X$.
        \end{enumerate}
\end{theorem}

\begin{remark}
Note that the assumption that $\phi$ is a regular flow is crucial for the arguments in this section. Indeed, in presence of singularities, a regular point may take long time to leave a small neighborhood of a singularity and this poses substantial obstacles for proving essential lemmas, including Lemma \ref{lemma_regular_cont_homeo}, which does not hold in the presence of singularities. In particular, the results in this section and the subsequent ones may not hold in presence of singularities. 
\end{remark}

\section{Brin-Katok Formula and Measures with Positive Entropy}\label{sec: Brin-Katok}
Our goal in this section is to prove a version of the Brin-Katok local entropy formula for regular flows using generalized dynamical balls and prove Theorem \ref{entropythm}. We first recall the original result for discrete-time systems.

\begin{theorem}[\cite{BK}]\label{thm_brin_katok}
    Let $f \colon X \to X$ be a continuous map. If $\mu$ is an ergodic $f$-invariant measure with $h_\mu(f)<\infty$, then for $\mu$-almost every $x\in X$ it holds that
     $$\lim_{\vep \to 0} \liminf_{n \to \infty} \frac{-\log(\mu(B_f^n(x,\eps)))}{n}= \lim_{\vep \to 0} \limsup_{n \to \infty} \frac{-\log(\mu(B_f^n(x,\eps)))}{n} = h_\mu(f).$$
\end{theorem}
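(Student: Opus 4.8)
The statement is the classical Brin--Katok local entropy formula, so the plan is to reproduce the standard two-sided estimate; throughout I use that the Katok covering entropy of the paper coincides with the Kolmogorov--Sinai entropy (Katok's theorem), so that the Shannon--McMillan--Breiman (SMB) and Birkhoff theorems are available. Write
$$\underline{h}(x,\vep)=\liminf_{n\to\infty}\frac{-\log\mu(B^n_f(x,\vep))}{n}, \qquad \overline{h}(x,\vep)=\limsup_{n\to\infty}\frac{-\log\mu(B^n_f(x,\vep))}{n},$$
and set $\underline{h}(x)=\lim_{\vep\to0}\underline{h}(x,\vep)$ and $\overline{h}(x)=\lim_{\vep\to0}\overline{h}(x,\vep)$, the limits existing since the quantities are monotone in $\vep$. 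Since $\underline h\le\overline h$ pointwise, it suffices to show $\overline h(x)\le h_\mu(f)$ and $\underline h(x)\ge h_\mu(f)$ for $\mu$-almost every $x$. Both functions are $f$-invariant, so by ergodicity each is $\mu$-a.e. constant, reducing the problem to comparing these two constants with $h_\mu(f)$.

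For the upper bound $\overline h\le h_\mu(f)$, I would fix $\vep$ and choose a finite measurable partition $\mathcal P$ with $\operatorname{diam}\mathcal P<\vep$. Any two points in the same atom of $\mathcal P^n:=\bigvee_{i=0}^{n}f^{-i}\mathcal P$ remain $\vep$-close for the first $n$ iterates, so the atom $\mathcal P^n(x)$ containing $x$ satisfies $\mathcal P^n(x)\subseteq B^n_f(x,\vep)$, whence $-\log\mu(B^n_f(x,\vep))\le-\log\mu(\mathcal P^n(x))$. By SMB, $\tfrac1n(-\log\mu(\mathcal P^n(x)))\to h_\mu(f,\mathcal P)\le h_\mu(f)$ a.e., so $\overline h(x,\vep)\le h_\mu(f)$, and letting $\vep\to0$ gives $\overline h(x)\le h_\mu(f)$.

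The lower bound $\underline h\ge h_\mu(f)$ is the main obstacle, since a Bowen ball can meet exponentially many atoms of $\mathcal P^n$ and so is much larger than a single atom. Here I would fix a partition $\mathcal P$ with $\mu(\partial\mathcal P)=0$ and estimate the number $N_n(x)$ of atoms of $\mathcal P^n$ meeting $B^n_f(x,\vep)$. The coordinate $i$ can split the ball across several atoms only when $f^i x$ lies in the $\vep$-neighborhood $\partial_\vep\mathcal P$ of $\partial\mathcal P$, so $N_n(x)\le(\#\mathcal P)^{R_n(x)}$ with $R_n(x)=\#\{0\le i\le n: f^i x\in\partial_\vep\mathcal P\}$. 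By Birkhoff, $\tfrac1n R_n(x)\to\mu(\partial_\vep\mathcal P)$ a.e., and $\mu(\partial_\vep\mathcal P)\to\mu(\partial\mathcal P)=0$ as $\vep\to0$. Bounding $\mu(B^n_f(x,\vep))\le\sum_A\mu(A)$ over the atoms $A$ meeting the ball, and using the SMB estimate $\mu(A)\le e^{-n(h_\mu(f,\mathcal P)-\eta)}$, yields
$$\frac{-\log\mu(B^n_f(x,\vep))}{n}\ge h_\mu(f,\mathcal P)-\eta-\mu(\partial_\vep\mathcal P)\log(\#\mathcal P)$$
for large $n$; letting $\vep\to0$ and refining $\mathcal P$ so that $h_\mu(f,\mathcal P)\uparrow h_\mu(f)$ gives $\underline h(x)\ge h_\mu(f)$.

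The delicate step I expect to cost the most care is the uniformization in the lower bound: the atoms $A$ meeting $B^n_f(x,\vep)$ are attached to points \emph{near} $x$, not to $x$ itself, so the SMB bound on $\mu(A)$ must hold simultaneously for all of them. This is managed by an Egorov/maximal argument restricting to a large-measure set on which SMB convergence is uniform, discarding the atoms that avoid it. Alternatively, one can avoid partitions in this direction and argue from the paper's Katok description of $h_\mu(f)$: if $\underline h(x)<h_\mu(f)$ on a set of full measure, then for every small $\vep$ infinitely many balls $B^n_f(x,\vep)$ have measure at least $e^{-n(\underline h+\gamma)}$, and a Vitali selection exploiting the nesting $B^n_f(y,\vep)\subseteq B^n_f(x,2\vep)$ for $y\in B^n_f(x,\vep)$ produces economical $(n,2\vep)$-covers of large-measure sets, contradicting $\lim_{\vep\to0}\limsup_n\tfrac1n\log S(n,\delta,\vep,f)=h_\mu(f)$.
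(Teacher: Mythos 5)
The paper offers no proof of this statement: it is quoted verbatim from Brin--Katok \cite{BK}, so there is no internal argument to compare yours against, and I can only assess your sketch on its own terms. Your reduction to a.e.-constancy by ergodicity, your upper bound (atoms of $\mathcal P^n$ for a partition of diameter $<\vep$ sit inside Bowen balls, then Shannon--McMillan--Breiman), and the count $N_n(x)\le(\#\mathcal P)^{R_n(x)}$ of atoms met by a Bowen ball, controlled via Birkhoff, are all correct and standard, granting the Katok-formula/Kolmogorov--Sinai equivalence you invoke.

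The genuine gap is in how you close the lower bound. Restricting to a set $K$ with $\mu(K)>1-\rho$ on which SMB is uniform and ``discarding the atoms that avoid it'' only yields
$$\mu(B^n_f(x,\vep))\le N_n(x)\,e^{-n(h_\mu(f,\mathcal P)-\eta)}+\mu(X\setminus K),$$
and the second term is a constant $\le\rho$ that does not decay in $n$; from this bound $-\tfrac1n\log\mu(B^n_f(x,\vep))\to 0$, so your displayed inequality does not follow, and Egorov provides no rate that would let $\rho$ shrink with $n$. The standard repair (and the one in \cite{BK}) is an additional Borel--Cantelli step: with $K_n$ the set of points satisfying the SMB and Birkhoff estimates at time $n$ and $c=h_\mu(f,\mathcal P)-\eta-2\mu(\partial_\vep\mathcal P)\log\#\mathcal P$, one sets $F_n=\{x\in K_n:\mu(B^n_f(x,\vep))\ge e^{-n(c-\gamma)}\}$, extracts a maximal disjoint family $B^n_f(x_i,\vep)$ with $x_i\in F_n$ (so there are at most $e^{n(c-\gamma)}$ centers), covers $F_n$ by the $B^n_f(x_i,2\vep)$ using the nesting of overlapping Bowen balls, and applies the atom count at radius $2\vep$ to each $B^n_f(x_i,2\vep)\cap K_n$; this gives $\mu(F_n)\le e^{-n\gamma/2}$, which is summable, and Borel--Cantelli finishes the liminf bound. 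Your ``alternative'' correctly identifies the Vitali ingredient but has the same soft spot: knowing that a.e.\ $x$ lies in infinitely many of the sets $G_n=\{x:\mu(B^n_f(x,\vep))\ge e^{-n(\underline h+\gamma)}\}$ does not produce even one $n$ with $\mu(G_n)\ge1-\delta$ (the $n$'s depend on $x$), so the contradiction with the covering formula is not yet assembled; the quantitative, summable estimate on $\mu(F_n)$ is what actually does the work.
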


For $x\in X$, $t \in \R^+$, and $\vep>0$ we write
$$\Gamma^+_{\eps}(x,t)=\{y\in X: \exists h\in Rep \enspace \text{s.t.} \enspace d(\phi_s(x)),\phi_{h(s)}(y)\leq \eps \enspace \forall s\in [0,t]\}.$$

\begin{theorem}\label{thm: BK-formula}
    Let $\phi$ be a regular flow on $X$. If $\mu$ is an ergodic $\phi$-invariant measure with $h_\mu(\phi)< \infty$, then for $\mu$-almost every $x \in X$ we have 
    $$\lim_{\vep \to 0} \liminf_{t \to \infty} \frac{-\log(\mu(\Ga_\vep^+(x,t)))}{t}= \lim_{\vep \to 0} \limsup_{t \to \infty} \frac{-\log(\mu(\Ga_\vep^+(x,t)))}{t} = h_\mu(\phi).$$
\end{theorem}

In order to prove Theorem \ref{thm: BK-formula}, we will need some lemmas. First, we observe that using a similar argument as in Lemma \ref{lemma: reparametrization} we obtain a version of Corollary \ref{cor_equal_balls} for forward dynamical balls with finite time as below.

\begin{lemma}\label{lemma_eq_balls_finita}
    If $\phi$ is a regular flow on $X$, then for every $\al \in (0,1)$ there exists $\de_0>0$ such that for every $\de \in (0,\de_0]$ and $x \in X$ we have $\Ga_\de^+(x,t) \subseteq \Ga_\al^{\al+}(x,t)$.
\end{lemma}

Informally, the next lemma says that we can cover a generalized Bowen balls with a controlled amount of discrete Bowen balls whose centers belong to the
initial generalized Bowen ball.
\begin{lemma}\label{lemma: coverballs}
    Let $\phi$ be a regular flow on $X$ and $L \in \N$. For every $\vep>0$ there is $\de_0>0$ such that for every $x \in X$, $\de \in(0,\de_0)$, and $t>L$ there exists $\{x_1, \ldots, x_\ell\} \subset\Ga_\de^+(x,t)$ such that $$\Ga_\de^+(x,t) \subseteq \bigcup_{i=1}^\ell B^{nL}_f(x_i,\vep),$$ where $n=\floor{\frac{t}{L}}$ and 
    $\ell \leq 3^{n-1}$, and $f = \phi_1$. 
    
    Furthermore, for any compact set $K\subseteq X$ containing $x$, the set $K\cap \Gamma^+_{\delta}(x,t)$ can be covered by at most $3^{n-1}$ balls $B^{nL}_f(x_i,\vep)$ whose centers belong to $K$.
\end{lemma}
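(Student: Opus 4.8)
The plan is to discretize the reparametrizations witnessing membership in $\Ga^+_\de(x,t)$ according to their behaviour at the block endpoints $kL$, to use this discrete data as a finite \emph{label} for each point, and to show that two points carrying the same label lie in a common discrete Bowen ball. Fix $\vep>0$. First I invoke Lemma \ref{lem: Bowen} to obtain $\theta>0$ such that $|r-r'|\le\theta$ implies $d(\phi_r(z),\phi_{r'}(z))\le\vep/2$ for every $z\in X$. Then I choose $\al\in(0,1)$ small (to be calibrated against $\theta$ and $L$) and let $\de_0$ be the constant produced by Lemma \ref{lemma_compilation_komuro}. For $\de\le\de_0$, every reparametrization $h$ realizing a point of $\Ga^+_\de(x,t)$ is $\al$-distorted on $[0,t]$, i.e.\ $|(h(s')-h(s))-(s'-s)|\le\al|s'-s|$ for $0\le s,s'\le t$; this comes from chaining the local estimate of Lemma \ref{lemma_compilation_komuro} across subintervals of length $<T_0$, exactly as in the proof of Lemma \ref{lemma: reparametrization}.

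For the counting, given $y\in\Ga^+_\de(x,t)$ with reparametrization $h$ I consider the drift $\tau(s)=h(s)-s$ and its values at the endpoints $s_k=kL$, $k=0,\ldots,n$, where $n=\floor{t/L}$. The $\al$-distortion bound gives $|\tau(s')-\tau(s)|\le\al|s'-s|$, so over a single block $|\tau(s_{k+1})-\tau(s_k)|\le\al L$. Rounding each $\tau(s_k)$ to the nearest multiple $m_k$ of the mesh $\rho:=2\al L$ produces a finite label $(m_0,\ldots,m_n)$. Since $\tau(0)=0$ we have $m_0=0$, and because $|\tau(s_1)-\tau(s_0)|\le\al L=\rho/2$ also $m_1=0$; for each $k\ge1$ the bound $|\tau(s_{k+1})-\tau(s_k)|\le\rho/2$ forces $m_{k+1}\in\{m_k-\rho,\,m_k,\,m_k+\rho\}$. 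Hence there are at most $3^{n-1}$ possible labels, and I select one point $x_i\in\Ga^+_\de(x,t)$ per realized label as a center, giving $\ell\le 3^{n-1}$ centers inside $\Ga^+_\de(x,t)$.

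To verify the covering, take $y$ of label $i$ with reparametrization $h$, and let $h_i$, $\tau_i$ be those of the center $x_i$. For an integer $j$ for which $s:=h^{-1}(j)\le t$ and $s_i:=h_i^{-1}(j)\le t$, the shadowing gives that $\phi_j(y)=\phi_{h(s)}(y)$ is $\de$-close to $\phi_s(x)$ and $\phi_j(x_i)=\phi_{h_i(s_i)}(x_i)$ is $\de$-close to $\phi_{s_i}(x)$. Writing $s-s_i=\tau_i(s_i)-\tau(s)$ and using both the $\al$-distortion bound and the fact that $\tau,\tau_i$ agree up to the mesh at every block endpoint, one gets $|s-s_i|\le\tfrac{4\al L}{1-\al}\le\theta$ once $\al$ is small enough. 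Lemma \ref{lem: Bowen} then yields $d(\phi_s(x),\phi_{s_i}(x))\le\vep/2$, so by the triangle inequality $d(\phi_j(y),\phi_j(x_i))\le 2\de+\vep/2\le\vep$ provided $\de\le\vep/4$, placing $y$ in $B^{nL}_f(x_i,\vep)$ for all such $j$. The "furthermore" clause follows by running the same construction but keeping only labels realized by points of $K\cap\Ga^+_\de(x,t)$ and choosing each center inside $K$; the family is nonempty since $x\in K\cap\Ga^+_\de(x,t)$ (take $h=\mathrm{id}$), and the count is again at most $3^{n-1}$.

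The step I expect to be the main obstacle is precisely the endpoint control implicit in the third paragraph. A reparametrization in $Rep(\al)$ only guarantees $h(t)\ge(1-\al)t$, so the $y$-orbit is pinned to the $x$-orbit only up to $y$-time $h(t)$, which may fall short of $nL$; the comparison on the terminal segment $(\,h(t),\,nL\,]$ cannot be routed through $x$ and must be handled separately. Making this rigorous forces a simultaneous calibration of $\al$ (equivalently $\de_0$), the mesh $\rho$, and $\theta$ against the fixed block length $L$, and it is here that Lemma \ref{lem: Bowen} must be applied most delicately; the remaining estimates are routine bookkeeping with the triangle inequality.
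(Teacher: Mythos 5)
Your construction is, in substance, the paper's own: you discretize the drift $h(s)-s$ at the block endpoints $kL$ with a mesh of order $\theta$, note that the label starts at $0$ and branches ternarily, obtain at most $3^{n-1}$ classes, and take one center per realized class; the calibration of constants via Lemma \ref{lem: Bowen} and Lemma \ref{lemma_compilation_komuro} (chained as in Lemma \ref{lemma: reparametrization}), the counting, and the treatment of the ``furthermore'' clause all match the paper's argument. Where you diverge is the verification that two same-label points lie in a common $B^{nL}_f(\cdot,\vep)$: you compare $y$ and the center $x_i$ at integer times $j$ by pulling $j$ back through $h^{-1}$ and $h_i^{-1}$ to the reference orbit, whereas the paper writes the shadowing with the reparametrizations on the $x$-side, i.e.\ $d(\phi_{g(s)}(x),\phi_s(y))\le\de$ and $d(\phi_{h(s)}(x),\phi_s(z))\le\de$, and compares $\phi_s(y)$ with $\phi_s(z)$ at the \emph{same} time $s$ through the two nearby points $\phi_{g(s)}(x)$ and $\phi_{h(s)}(x)$; this yields $y\in B^t_\phi(z,\vep/2)\subseteq B^{nL}_f(z,\vep)$ at once, with no need to invert anything at the comparison stage.

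The obstacle you flag at the end is therefore real for your formulation and is left open in your write-up: your argument controls $d(\phi_j(y),\phi_j(x_i))$ only for integers $j\le\min(h(t),h_i(t))$, and since $h(t)$ may be as small as $(1-\al)t$ while $nL$ can equal $t$, the uncovered terminal segment contains up to roughly $\al nL$ integers, a quantity growing linearly in $t$ that cannot be absorbed by Lemma \ref{lem: Bowen}. The repair is exactly the paper's symmetric comparison above (equivalently, attach the label to the inverse reparametrization and never invert back), which reduces the endpoint question to the single observation $nL\le t$; if you keep your formulation you must handle that segment explicitly. Two smaller points: your bound $|s-s_i|\le 4\al L/(1-\al)$ requires a short bootstrap to guarantee that $s$ and $s_i$ sit in the same or adjacent blocks before the label comparison applies (one gets $|s-s_i|\le C\al L$ with a slightly different constant), and for the ``furthermore'' part your choice of centers inside $K$ is fine because any two same-label points are mutually $\vep$-close in the $B^{nL}_f$ sense, which is also how the paper's radius-doubling recentering works.
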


\begin{proof}
    Fix $\vep>0$ and $L \in \N$. We first assume that $t = nL$ for some $n\in\N$. By Lemma \ref{lem: Bowen}, there exists $\te>0$ such that $d(\phi_s(x),x) \leq \frac{\vep}{6}$ for every $x \in X$ and $s \in [-\te,\te]$. 
    Let $\de_0>0$ be given by Lemma \ref{lemma_eq_balls_finita} for $\al = \min\{\frac{\te}{3L}, \frac{\vep}{6}\}$. 

    For each $g \in Rep(\al)$, we denote $\ga_g=(\ga_g(0), \ldots, \ga_g(k)) \in \Z^n$, where $\ga_g(k)$ is defined for each $k \in \{0, \ldots, n-1\}$ as $$\ga_g(k) = \floor{\frac{g(kL)-kL}{\al L}}.$$

    Note that for every $g \in Rep(\al)$ we have $\ga_g(0) = 0$. Moreover, for each $k \in \{0,\ldots, n-2\}$ we have $$\ga_g(k+1) - \ga_g(k) \in \{-1,0,1\}.$$ Hence, there are $3^{n-1}$ sequences $\ga_1, \ldots,\ga_{3^{n-1}} \in \Z^n$ such that for every $g \in Rep(\al)$ one has $\ga_g \in \{\ga_1, \ldots,\ga_{3^{n-1}}\} \subset \Z^n$. Set $\SS = \{\ga_1, \ldots,\ga_{3^{n-1}}\}$.

    Fix $x \in X$ and $\de \in (0,\de_0]$. By Lemma \ref{lemma_eq_balls_finita}, if $y,z \in \Ga_\de^+(x,t)$, then there exist $g,h \in Rep(\al)$ such that for every $s \in [0,t]$ one has $$d(\phi_{g(s)}(x),\phi_s(y))\leq \al \ \ \text{and} \ \ d(\phi_{h(s)}(x),\phi_s(z))\leq \al.$$

    We observe that if $\ga_g = \ga_h$, then for every $s \in [0,t]$ we have
    \begin{align*}
        |g(s)-h(s)| & \leq \left|(g(s) - s)-\left(g(\floor{\frac{s}{L}}L) -\floor{\frac{s}{L}}L\right)\right| \\
        & + \left|\left(g(\floor{\frac{s}{L}}L) -\floor{\frac{s}{L}}L\right) - \left(h(\floor{\frac{s}{L}}L) -\floor{\frac{s}{L}}L\right)\right| \\
        &+ \left|(h(s)-s) - \left(h(\floor{\frac{s}{L}}L) -\floor{\frac{s}{L}}L\right)\right| \\
        & \leq \al L + \al L\left| \frac{g(\floor{\frac{s}{L}}L) -\floor{\frac{s}{L}}L}{\al L} - \frac{h(\floor{\frac{s}{L}}L) -\floor{\frac{s}{L}}L}{\al L}\right| + \al L \\ &\leq 3\al  L \leq \te  .
    \end{align*}
    As a consequence, we obtain that $ d(\phi_{g(s)}(x), \phi_{h(s)}(x)) \leq \frac{\vep}{6}$ for every $s \in [0,t]$, which implies that
    \begin{align*}
        d(\phi_s(y),\phi_s(z))  \leq d(\phi_{g(s)}(x),\phi_s(y)) + d(\phi_{g(s)}(x), \phi_{h(s)}(x)) + d(\phi_{h(s)}(x),\phi_s(z)) <\frac{\vep}{2}.
    \end{align*}
    That is, $y \in B^t_\phi(z,\frac{\vep}{2})$ and $z \in B^t_\phi(y,\frac{\vep}{2})$.
    
    Since $|\SS|=3^{n-1}$, there exists $\{x_1, \ldots, x_\ell\} \subseteq \Ga_\de^+(x,t)$ with $\ell \leq 3^{n-1}$ such that $$\Ga_\de^+(x,t) \subseteq \bigcup_{i=1}^\ell B^t_ \phi(x_i,\frac{\vep}{2}) \subseteq\bigcup_{i=1}^\ell B^{nL}_f(x_i,\frac{\vep}{2})\subset \bigcup_{i=1}^\ell B^{nL}_f(x_i,\vep).$$

    We now prove for an arbitrary $t\in\R^+$. Let $n = \floor{\frac{t}{L}}$. Note that $t \in \left[nL, \left(n+1\right)L\right)$. We already proved that there exists $\{x_1, \ldots, x_\ell\}\subset \Ga_\de^+(x,t)$ for some $\ell \leq 3^{n-1}$, such that $\Ga_\de^+(x,nL)\subseteq\bigcup_{i=1}^\ell B^{nL}_f(x_i,\vep)$.
    Since  $\Ga_\de^+(x,t) \subseteq \Ga_\de^+(x,nL)$, we conclude that $\Ga_\de^+(x,t) \subseteq\bigcup_{i=1}^\ell B^{nL}_f(x_i,\vep)$.

    Next, we prove the furthermore part. Let $K\subseteq X$ be a compact set containing $x$. Note that $$K \cap \Ga_\de^+(x,t)\subseteq \Ga_\de^+(x,t)\subseteq \bigcup_{i=1}^\ell B^{nL}_f(x_i,\frac{\vep}{2}).$$
    In particular, for each $y\in K\cap \Ga_\de^+(x,t)$ there is $i\in \{1,\ldots,\ell\}$ such that $$y\in B^{nL}_f(x_i,\frac{\vep}{2}) \subseteq B^{nL}_f(y,\vep).$$ 
    
    Therefore, there is $\{y_1,\ldots,y_{\ell'}\}\subset K\cap \Ga_\de^+(x,t)$, with $\ell'\leq \ell \leq 3^{n-1}$, such that  \[K\cap \Ga_\de^+(x,t)\subset \bigcup_{i=1}^{\ell'} B^{nL}_f (y_i,\vep). \qedhere\]
\end{proof}

\begin{lemma}\label{lemma_calculations_size_balls}
	Let $f \colon X \to X$ be a continuous map and $\mu$ be a non-atomic $f$-invariant measure with $h_\mu(f)<\infty$. For every $\rho>0$ and $\xi \in (0,h_\mu(f))$ there exist $\vep >0$, $n_0\in \N$, and a compact set $K\subseteq X$ with $\mu(K)>1-\rho$ such that for every $x \in K$ and $n \geq n_0$ it holds that
	$$\mu(B^n_f(x,\vep)) \leq e^{-n(h_\mu(f) - \xi)}.$$
\end{lemma}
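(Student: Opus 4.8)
The plan is to obtain the lemma from the Brin--Katok formula (Theorem~\ref{thm_brin_katok}) by upgrading the pointwise almost-everywhere convergence it provides into an estimate that is uniform both over a large compact set and over all large $n$. For $\vep>0$ and $n\in\N$, write
\[
\psi_{n,\vep}(x)=\frac{-\log\mu(B^n_f(x,\vep))}{n},\qquad \underline{\psi}_\vep(x)=\liminf_{n\to\infty}\psi_{n,\vep}(x).
\]
The first point I would record is measurability: since $f$ is continuous and $\mu$ is finite, the map $x\mapsto\mu(B^n_f(x,\vep))$ is upper semicontinuous (if $x_k\to x$, then $\limsup_k \mathbf{1}_{B^n_f(x_k,\vep)}\le \mathbf{1}_{B^n_f(x,\vep)}$ pointwise, and the reverse Fatou lemma applies). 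Hence each $\psi_{n,\vep}$ is lower semicontinuous and $\underline{\psi}_\vep$ is Borel, so all the sets introduced below are measurable. Theorem~\ref{thm_brin_katok} then gives $\lim_{\vep\to0}\underline{\psi}_\vep(x)=h_\mu(f)$ for $\mu$-almost every $x$.

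Next I would fix the radius. Since $B^n_f(x,\vep)\subseteq B^n_f(x,\vep')$ whenever $\vep\le\vep'$, the function $\vep\mapsto\underline{\psi}_\vep(x)$ is non-increasing, so $\underline{\psi}_\vep(x)\uparrow h_\mu(f)$ as $\vep\downarrow0$ for almost every $x$. Consequently the sets
\[
F_\vep=\Big\{x:\ \underline{\psi}_\vep(x)>h_\mu(f)-\tfrac{\xi}{2}\Big\}
\]
increase as $\vep\downarrow0$ and their union has full measure; by continuity from below I can choose $\vep>0$ with $\mu(F_\vep)>1-\tfrac{\rho}{2}$ and keep this $\vep$ fixed for the rest of the proof.

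The heart of the argument is an Egorov-type step carried out by hand, converting the $\liminf$ into a uniform threshold. For $x\in F_\vep$ we have $\underline{\psi}_\vep(x)>h_\mu(f)-\tfrac{\xi}{2}>h_\mu(f)-\xi$, so there is $N(x)$ with $\psi_{n,\vep}(x)\ge h_\mu(f)-\xi$ for all $n\ge N(x)$. Setting
\[
K_m=\{x\in F_\vep:\ \psi_{n,\vep}(x)\ge h_\mu(f)-\xi\ \text{ for all } n\ge m\}
\]
yields Borel sets increasing to $F_\vep$, whence $\mu(K_m)\to\mu(F_\vep)>1-\tfrac{\rho}{2}$; I fix $n_0=m$ so large that $\mu(K_{n_0})>1-\rho$. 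By inner regularity of $\mu$ I then choose a compact set $K\subseteq K_{n_0}$ with $\mu(K)>1-\rho$. For $x\in K$ and $n\ge n_0$ the defining inequality $\psi_{n,\vep}(x)\ge h_\mu(f)-\xi$ is precisely $\mu(B^n_f(x,\vep))\le e^{-n(h_\mu(f)-\xi)}$, which is the assertion.

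I expect the main obstacle to be exactly this last passage: collapsing the two nested qualifiers ``for almost every $x$'' and ``$\liminf_{n}$'' into a single pair $(\vep,n_0)$ that works uniformly on a set of measure greater than $1-\rho$. The monotonicity in $\vep$ lets me freeze one radius, and the monotone exhaustion $K_m\uparrow F_\vep$ freezes one $n_0$; both rely on the measurability of $\psi_{n,\vep}$ noted above. I would also flag that the Brin--Katok input is used in the regime where the local entropy equals the global value $h_\mu(f)$ almost everywhere, which is where the finiteness $h_\mu(f)<\infty$ and the absence of atoms (ruling out the degenerate periodic case) enter.
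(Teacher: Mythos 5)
Your proof is correct, and it reaches the lemma by the same overall route as the paper: feed in the Brin--Katok formula (Theorem~\ref{thm_brin_katok}) and then upgrade the pointwise almost-everywhere statement to a bound that is uniform in $x$ over a large compact set and in $n\ge n_0$. Where the two arguments genuinely differ is in how this uniformization is carried out. The paper first picks a compact $K'$ of large measure inside the full-measure Brin--Katok set, asserts by ``compactness'' that a single $\vep_0$ works on all of $K'$, and then applies Lusin's Theorem to get a smaller compact $K$ on which $\SF_\vep$ and the $\SF_{n,\vep}$ are continuous, from which it extracts a uniform $n_0$. Both of those steps are stated rather tersely: compactness alone does not give a uniform $\vep_0$ without some continuity or monotonicity input, and continuity of the limit function $\SF_\vep$ on $K$ does not by itself yield a uniform $n_0$ for the $\liminf$ (one needs a Dini/Egorov-type argument for the sequence $\inf_{m\ge n}\SF_{m,\vep}$). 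Your version replaces both steps by monotone exhaustions --- the sets $F_\vep$ increasing as $\vep\downarrow 0$ to freeze the radius, and the sets $K_m\uparrow F_\vep$ to freeze $n_0$ --- followed by inner regularity to pass to a compact set. This is essentially Egorov done by hand, it only uses the measurability of $\psi_{n,\vep}$ (which you justify via upper semicontinuity of $x\mapsto\mu(B^n_f(x,\vep))$), and it fills in precisely the points the paper leaves implicit. The one caveat, which applies equally to both proofs, is that Theorem~\ref{thm_brin_katok} as stated requires $\mu$ ergodic, whereas the lemma only hypothesizes a non-atomic invariant measure; in the paper's application the measure is ergodic for $\phi_1$, so nothing is lost, but strictly speaking the ergodicity hypothesis should be carried into the lemma.
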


\begin{proof}
	For every $x \in X$ and $n \in \N$ and $\vep>0$ we define
	$$\SF_{n,\vep}(x) =\frac{-\log\left( \mu \left(B^n_f\left(x, \vep \right)\right)\right)}{n};$$
	$$\SF_{\vep}(x) = \liminf_{n \to \infty} \SF_{n,\vep}(x);$$ $$\SF(x)= \lim_{\vep \to 0}\SF_\vep(x).$$
	
	 By Theorem \ref{thm_brin_katok} there exists a Borel set $H \subseteq X$ with $\mu(H)=1$ such that $\SF(x) = h_\mu(f)$ for every $x \in H$. Fix $\rho>0$ and $\xi \in (0,h_\mu(f))$. Let $K'\subseteq H$ be a compact set with $\mu(K')>1-\rho$. By compactness, there exists $\vep_0>0$ such that $\SF_\vep(x) > h_\mu(f)-\xi$ for every $x \in K'$ and every $\vep \in (0,\vep_0)$.
	
	Fix $\vep \in (0,\vep_0)$. Note that $\SF_\vep$ and each $\SF_{n,\vep}$ are measurable functions on $X$.
	By Lusin's Theorem, we obtain a compact set $K \subseteq K'$ with $\mu(K)>1-\rho$ such that $\SF_\vep$ and each $\SF_{n,\vep}$ are continuous on $K$.
	In particular, by the continuity of $\SF_\vep$, there exists $n_0 \in \N$ such that for every $x \in K$ and $n \geq n_0$ it holds that
	\begin{equation}\label{eq_brin_katok}
		\frac{-\log\left( \mu (B^n_f\left(x,\vep\right))\right)}{n} \geq h_\mu(f) - \xi.
	\end{equation}
	It follows from \eqref{eq_brin_katok} that for every $x \in K$ and $n \geq n_0$ one has
	\[\mu(B^n_f(x,\vep)) \leq e^{-n(h_\mu(f) - \xi)}. \qedhere\]
\end{proof}

Now we are ready to prove Theorem \ref{thm: BK-formula} 

\begin{proof}[Proof of Theorem \ref{thm: BK-formula}]
		Let $\mu \in \SM_\phi(X)$. Recall that if $\mu$ is ergodic for $\phi$, then $\mu$ is ergodic for $\phi_t$ except by at most countably many $t \in \R$. 
		For simplicity, we assume without loss of generality that $\mu$ is ergodic for $\phi_1$. We write $f = \phi_1$, that is, $$0<h_\mu(f)=h_\mu(\phi)<\infty.$$
		
		By the continuity of the flow, for every $n \in \N$, $t \in [n,n+1)$ and $\vep>0$ there exists $0<\de<\vep$ such that 
		$B^n_f(x,\de) \subseteq B^t_\phi(x,\vep)$. 
		Hence,
		$$\lim_{\vep \to 0} \limsup_{t\to \infty} \frac{-\log\left(\mu(B^t_\phi(x,\vep)\right)}{t} \leq \lim_{\de \to 0} \lim_{n \to \infty} \frac{- \log\left(\mu(B^n_f(x,\de))\right)}{n} = h_\mu(f)= h_\mu(\phi).$$
		Since $B^t_\phi(x,\vep) \subseteq \Ga_\vep^+(x,t)$, for every $x \in X$, $\vep>0$, and $t \in \R^+$, we conclude that 
		$$\lim_{\vep \to 0} \limsup_{t\to \infty} \frac{-\log\left(\mu(\Ga_\vep^+(x,t)\right)}{t} \leq \lim_{\vep \to 0} \limsup_{t\to \infty} \frac{-\log\left(\mu(B^t_\phi(x,\vep)\right)}{t} \leq h_\mu(\phi).$$
		
		To complete the proof we will show that for every $\xi \in (0,h_\mu(\phi))$ and $\mu$-almost every $x \in X$ it holds that
		\begin{equation*}\label{eq_liminf_BK}
			\lim_{\de \to 0} \liminf_{t\to \infty} \frac{-\log\left(\mu(\Ga_\de^+(x,t)\right)}{t} \geq h_\mu(f)-\xi.
		\end{equation*}
		
		Let $H \subseteq X$ be the set of full measure provided by Theorem \ref{thm_brin_katok}. It suffices to prove that for every $x \in H$,  $\xi\in \left(0,\frac{h_\mu(\phi)}{2}\right)$, and sufficiently small $\de>0$ it holds that
			$$\liminf_{t\to \infty} \frac{-\log\left(\mu(\Ga_\de^+(x,t)\right)}{t} \geq h_\mu(f)-2\xi.$$

		Fix $\xi \in \left(0,\frac{h_\mu(f)}{2}\right)$.
		Let $\vep>0$, $L \in \N$ be such that $\frac{\log3}{L}< \xi$, and  $\de_0$ be provided by Lemma \ref{lemma: coverballs}. For every $\rho \in (0,1)$ there exists a compact set $K^\rho \subset H$ containing $x$ with $\mu(K^\rho)>1-\rho$. Let $n_\rho \in \N$ be provided by Lemma \ref{lemma_calculations_size_balls}. That is, for every $y \in K^\rho$ and $n \geq n_\rho$, it holds that 
			\begin{equation}\label{eq_BK_1}
				\mu(B^n_f(y,\vep)) \leq e^{-n(h_\mu(f) - \xi)}.
			\end{equation}
			
		By Lemma \ref{lemma: coverballs}, for every $\de \in (0,\de_0)$ and $t\geq L$, there is $A_t=\{x_1,\ldots ,x_{\ell_t}\}\subset K^\rho$ such that
		\begin{equation}\label{eq_BK_2}
		    K^\rho_\de(x,t) \subseteq \bigcup_{i=1}^{\ell_t} B^{n_tL}_f(x_i,\eps),
		\end{equation}
		where, $K^\rho_\de(x,t)=K^\rho\cap \Gamma^+_{\delta}(x,t)$, $n_t=\floor{\frac{t}{L}}$ and $\ell_t\leq 3^{n_t-1}$. 
			
			Hence, it follows from \eqref{eq_BK_1} and \eqref{eq_BK_2} that for every $t \in \R^+$ such that $n_tL \geq n_\rho$ one has
			\begin{equation}\label{eq_ball_rho}
				\mu(\Ga_\de^+(x,t)) \leq \mu(
				K^\rho_\de(x,t)) + \rho \leq 3^{n_t-1} \cdot  e^{-n_tL(h_\mu(f) - \xi)} + \rho.
			\end{equation}
			
		Since \eqref{eq_ball_rho} holds for $\rho \in (0,1)$ and large enough $t$, we conclude that 
		\begin{align*}
			\begin{split}
				\liminf_{t\to \infty} \frac{-\log\left(\mu(\Ga_\de^+(x,t)\right)}{t}  &\geq \liminf_{t\to \infty}   \frac{-\log (3^{n_t-1} \cdot e^{-n_tL(h_\mu(f) - \xi)})}{(n_t+1)L} \\ & \geq  \liminf_{t\to \infty}  \frac{n_t}{n_t+1}(h_\mu(f) - \xi)-\frac{\log 3}{L} \\ & \geq  \liminf_{t\to \infty} \frac{n_t}{n_t+1}(h_\mu(f) - \xi)-\xi \\
				&= h_{\mu}(f)-2\xi. \qedhere
			\end{split}
		\end{align*}
	\end{proof}
    
 Let us now concentrate ourselves on the proof of Theorem \ref{entropythm}.  Let us begin with following lemma that will play a crucial role in our proof. 

\begin{lemma}\label{lemma_Avep_borel}
    Let $\phi$ be a regular flow on $X$, $\mu \in \SM(X)$ and $C\geq 0$. There exists $\vep_0>0$ such that if $\mu$ is expansive for $\phi$ and $\vep \in (0,\vep_0]$ is an expansivity constant for $\mu$, then $$A_{\vep,C}=\{x \in X : \mu(\Ga_\vep^\vep(x)) \geq C\}$$ is a Borel set. 
    In particular, if $\mu$ is not expansive, then for every $\vep \in (0,\vep_0]$ there exists $C=C(\vep)>0$ such that $\mu(A_{\vep,C})>0$.
\end{lemma}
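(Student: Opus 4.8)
The plan is to reduce the measurability of $x \mapsto \mu(\Ga_\vep(x))$ to the closedness of its graph, and to settle the positive-measure claim by recycling the ball inclusion already isolated in the proof of Theorem \ref{thm_al_vep_ball}. First I would fix some $\al \in (0,1)$ (say $\al = 1/2$) and take $\vep_0>0$ from Corollary \ref{cor_equal_balls}, so that $\Ga_\vep(x) = \Ga_\vep^\al(x)$ for every $x \in X$ and every $\vep \in (0,\vep_0]$; in particular each $\Ga_\vep(x)$ is closed by Lemma \ref{lemma_convergence_dyn_balls}. Note that this choice of $\vep_0$ makes the whole argument independent of whether $\mu$ is expansive.

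The core step is to show that $g \colon X \to [0,1]$, $g(x) = \mu(\Ga_\vep^\al(x))$, is upper semicontinuous, which forces $A_{\vep,C} = \{x : g(x) \geq C\}$ to be closed, hence Borel. I would consider $\Omega = \{(x,y) \in X \times X : y \in \Ga_\vep^\al(x)\}$, whose $x$-section is precisely $\Ga_\vep^\al(x)$. The content of Lemma \ref{lemma_convergence_dyn_balls} is exactly that $\Omega$ is closed in $X \times X$: if $(x_k,y_k) \to (x,y)$ with $y_k \in \Ga_\vep^\al(x_k)$, then $y \in \Ga_\vep^\al(x)$. Thus $\mathbf{1}_\Omega$ is upper semicontinuous, and for $x_k \to x$ the reverse Fatou lemma (the indicators are dominated by the integrable constant $1$) gives $\limsup_k g(x_k) = \limsup_k \int \mathbf{1}_\Omega(x_k,y)\,d\mu(y) \leq \int \limsup_k \mathbf{1}_\Omega(x_k,y)\,d\mu(y) \leq \int \mathbf{1}_\Omega(x,y)\,d\mu(y) = g(x)$, the middle inequality being the sectionwise closedness of $\Omega$. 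Hence $A_{\vep,C}$ is closed (in particular Borel) for every $\vep \in (0,\vep_0]$.

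For the \emph{in particular} statement, suppose $\mu$ is not expansive and fix $\vep \in (0,\vep_0]$. Applying non-expansivity to $\vep/2 \in (0,\vep_0]$ yields $x_0 \in X$ with $C := \mu(\Ga_{\vep/2}(x_0)) > 0$. The key observation, lifted from the computation in the proof of Theorem \ref{thm_al_vep_ball}, is that $\Ga_{\vep/2}(x_0) \subseteq \Ga_\vep(y_0)$ for every $y_0 \in \Ga_{\vep/2}(x_0)$: given $z \in \Ga_{\vep/2}(x_0)$ with reparametrization $h$ and $y_0$ with reparametrization $g$, the triangle inequality together with the substitution $s = g(t)$ produces $h \circ g^{-1} \in Rep$ witnessing $z \in \Ga_\vep(y_0)$. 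Consequently every $y_0 \in \Ga_{\vep/2}(x_0)$ satisfies $\mu(\Ga_\vep(y_0)) \geq C$, that is $\Ga_{\vep/2}(x_0) \subseteq A_{\vep,C}$, whence $\mu(A_{\vep,C}) \geq \mu(\Ga_{\vep/2}(x_0)) = C > 0$.

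I expect the main obstacle to be the measurability step: the only nontrivial input is recognizing that Lemma \ref{lemma_convergence_dyn_balls} is the closed-graph property of $\Omega$, from which upper semicontinuity of the sectional measure follows routinely via reverse Fatou. Everything else is bookkeeping. I would also remark that the expansivity hypothesis is inessential for Borelness — when $\vep$ is an expansivity constant, $A_{\vep,C}$ is empty for $C>0$ and equals $X$ for $C=0$, so the assertion is trivial there — but proving Borelness unconditionally for $\vep \leq \vep_0$ is exactly what renders $\mu(A_{\vep,C})$ meaningful in the non-expansive regime where the second claim lives.
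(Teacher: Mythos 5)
Your proof is correct, and it splits into one half that mirrors the paper and one half that genuinely diverges. For the Borel-ness of $A_{\vep,C}$, both you and the paper take $\vep_0$ from Corollary \ref{cor_equal_balls} so that $\Ga_\vep(x)=\Ga_\vep^\al(x)$ is compact, and both extract measurability of $x\mapsto\mu(\Ga_\vep(x))$ from the convergence statement of Lemma \ref{lemma_convergence_dyn_balls}; the paper phrases this as upper semicontinuity of the map $X\to\SK(X)$ and simply asserts measurability of $\SF_\vep$, whereas your reverse-Fatou computation on the closed set $\Om=\{(x,y):y\in\Ga_\vep^\al(x)\}$ actually proves that $\SF_\vep$ is upper semicontinuous, hence that $A_{\vep,C}$ is closed --- a slightly stronger and more self-contained conclusion, and your observation that the expansivity hypothesis plays no role here is accurate (the paper's proof also never uses it). For the ``in particular'' clause the routes differ: the paper invokes Theorem \ref{thm_al_vep_ball} to produce a Borel set of positive measure on which $\SF_\vep>0$ and then applies Lusin's theorem to find a compact subset on which $\SF_\vep$ is continuous and hence bounded below by some $C_\vep>0$; you instead apply non-expansivity at radius $\vep/2$ and reuse the ball-doubling inclusion $\Ga_{\vep/2}(x_0)\subseteq\Ga_\vep(y_0)$ for $y_0\in\Ga_{\vep/2}(x_0)$ (the $h\circ g^{-1}$ triangle-inequality trick from the proof of Theorem \ref{thm_al_vep_ball}) to exhibit the explicit set $\Ga_{\vep/2}(x_0)\subseteq A_{\vep,C}$ with $C=\mu(\Ga_{\vep/2}(x_0))>0$. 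Your version is more elementary (no Lusin, no appeal to the full equivalence of Theorem \ref{thm_al_vep_ball}) and produces a concrete constant $C$, while the paper's version is shorter given that Theorem \ref{thm_al_vep_ball} and Lusin are already standing tools in the text; both are valid.
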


\begin{proof}
    Let $\vep_0>0$ be given by Corollary \ref{cor_equal_balls}. Then $\Ga_\vep^\vep(x)$ is a compact set for every $x \in X$ and $\vep \in (0,\vep_0]$. If we fix $\vep \in (0,\vep_0]$, then $\Ga_\vep^\vep(\cdot)$ defines a map from $X$ to $\SK(X)$, which by Lemma \ref{lemma_convergence_dyn_balls}, is upper semicontinuous. Therefore, the function $\SF_\vep \colon X \to \R$ defined as $\SF_\vep(x) = \mu(\Ga_\vep^\vep(x))$ is a measurable function. To see that $A_{\vep,C}$ is a Borel set, it is enough to observe that $A_{\vep,C} = \SF_\vep^{-1}([C,\infty))$.

    Now assume that $\mu$ is not expansive for $\phi$. By Theorem \ref{thm_al_vep_ball}, for every $\vep \in (0,\vep_0]$ there exists a Borel set $A \subseteq X$ with $\mu(A)>0$ such that $\mu(\Ga_\vep^\vep(x))>0$ for every $x \in A$. By Lusin's Theorem, there exists a compact set $K \subseteq A$ with $\mu(K) > 0$ such that $\SF_\vep$ is continuous on $K$, and consequently, there exists $C_\vep>0$ such that $\SF_\vep(x) \geq C_\vep$ for every $x \in K$. Therefore, $\mu(A_{\vep,C_\vep})>0$.
\end{proof}
Next, we are ready to prove Theorem \ref{entropythm}

\begin{proof}[Proof of Theorem \ref{entropythm}]
    Let $\mu \in \SM_\phi(X)$ be an ergodic measure such that $h_\mu(\phi)>0$. Suppose $\mu$ is not expansive. By Lemma \ref{lemma_Avep_borel}, for every sufficiently small $\vep>0$ there exists $C_\vep>0$ such that $\mu(A_{\vep})>0$, where $$A_{\vep} = \{x \in X : \mu(\Ga_\vep^\vep(x)) \geq C_\vep\}.$$ 
    
    Note that $\Ga_\vep^\vep(x) \subseteq \Ga_\vep^+(x,t)$ for every $x \in X$ and $t \geq 0$. Hence, $\mu(\Ga_\vep^+(x,t)) \geq C_\vep$ for every $t \geq 0$ and $x \in A_\vep$.
    
    Since $\mu$ is ergodic for $\phi$, $\mu$ is ergodic for $\phi_t$ except by at most countably many $t \in \R$.  
    We can assume without loss of generality that $\mu$ is ergodic for $\phi_1$. Moreover, since we assumed $h_\mu(\phi)>0$, by Theorem \ref{thm: BK-formula}, there exists a Borel set $H\subset X$ with $\mu(H)=1$ such that for every $x \in H$ it holds that $$h_{\mu}(\phi)=\lim\limits_{\eps\to 0}\limsup\limits_{t\to \infty}\frac{-\log(\mu(\Gamma^+_{\eps}(x,t)))}{t}.$$
     
    Since $\mu(A_{\eps})>0$ for every sufficiently small $\vep>0$, it follows that $H\cap A_\vep \neq \emptyset$. If we take $x_0 \in H \cap A_\vep$, then $-\log(\mu(\Gamma^{+}_{\eps}(x_0,t))\leq -\log(C_\vep)$ for every $t>0$. But this implies that $$\limsup\limits_{t\to \infty}\frac{-\log(\mu(\Gamma^+_{\eps}(x_0,t))}{t}\leq \limsup\limits_{t\to\infty }\frac{-\log(C_\vep)}{t}=0.$$
    By taking $\eps\to 0$, we obtain $$h_{\mu}(\phi)=\lim\limits_{\eps\to 0}\limsup\limits_{t\to \infty}\frac{-\log(\mu(\Gamma^+_{\eps}(x,t)))}{t}=0,$$
    contradicting the assumption $h_{\mu}(\phi)>0$, and therefore, $\mu$ must be expansive.
\end{proof}

\section{Approximating Expansive Measures by Measures with Invariant Support }\label{sec: thmE}

In this section we prove Theorem \ref{thm_supp_invariant_g_de_si}. While the proof is based on the arguments presented in \cite{LMS},  we stress that it is not a straightforward transcription of their argument and we need to be cautious. In the flows case, due to the necessity to deal with reparametrizations, several obstacles appear in the adaptation of the proofs. To overcome these obstacles, supported by Theorem \ref{thm_al_vep_ball}, we will strategically restrict ourselves to reparametrizations in $Rep(\al)$.

\begin{definition}
    Let $\phi$ be a continuous flow on $X$. We say that $\mu \in \SM(X)$ is \textbf{$(\al,\vep)$-expansive} for $\phi$ if $\mu(\Ga_\vep^\al(x))=0$ for every $x \in X$. We denote by $\ExpF(\al,\vep) \subset \SM(X)$ the set of all $(\al,\vep)$-expansive measures.
\end{definition}

We start by proving an analogue to \cite[Lemma 3.1]{LMS}. More precisely, we will show that $\ExpF(\al,\vep)$ is a $G_\de$-subset of $\SM(X)$ for every $\al \in (0,1)$ and $\vep>0$. The proof of \cite[Lemma 3.1]{LMS} strongly relies on convergence of discrete-time dynamical balls. This convergence for flows is provided by Lemma \ref{lemma_convergence_dyn_balls}.

\begin{lemma}\label{lemma_exp_gdelta}
    If $\phi$ is a regular flow on a compact metric space $X$, then $\SE^\phi(\al,\vep)$ is a $G_\de$-subset of $\SM(X)$ for every $\al \in (0,1)$ and $\vep>0$.
\end{lemma}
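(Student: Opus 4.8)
The plan is to exhibit $\ExpF(\al,\vep)$ as a countable intersection of open subsets of $\SM(X)$. Since $X$ is compact metric, the weak*-topology on $\SM(X)$ is metrizable, so it suffices to argue sequentially. For each $n \in \N$ I would set
$$U_n = \left\{\mu \in \SM(X) : \mu(\Ga_\vep^\al(x)) < \tfrac1n \ \text{for every } x \in X\right\}.$$
Then $\mu \in \bigcap_{n \in \N} U_n$ if and only if $\mu(\Ga_\vep^\al(x)) = 0$ for every $x \in X$, that is, $\bigcap_n U_n = \ExpF(\al,\vep)$. Hence the whole problem reduces to showing that each $U_n$ is open, equivalently that its complement $C_n = \{\mu : \exists x \in X,\ \mu(\Ga_\vep^\al(x)) \ge \tfrac1n\}$ is closed.

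To prove $C_n$ closed, I would take $\mu_k \in C_n$ with $\mu_k \to \mu$ in the weak*-topology, choose centers $x_k$ with $\mu_k(\Ga_\vep^\al(x_k)) \ge 1/n$, and use compactness of $X$ to pass to a subsequence with $x_k \to x$; the goal becomes $\mu(\Ga_\vep^\al(x)) \ge 1/n$. Two facts drive the argument. First, by Lemma \ref{lemma_convergence_dyn_balls} each $\Ga_\vep^\al(x)$ is closed (hence compact and Borel), and the same lemma yields that the graph $\{(x,y) : y \in \Ga_\vep^\al(x)\}$ is closed. Second, for any closed set $K$ the evaluation $\nu \mapsto \nu(K)$ is upper semicontinuous for the weak*-topology (Portmanteau).

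The main obstacle is precisely that the quantity one must control, $\sup_{x} \mu(\Ga_\vep^\al(x))$, is a supremum of upper semicontinuous functions, which is only lower semicontinuous in general; thus direct semicontinuity does not suffice and the moving centers $x_k$ must be handled with care. I would overcome this as follows. Writing $K = \Ga_\vep^\al(x)$ and, for $r>0$, $K_r = \{y : d(y,K) \le r\}$, and noting $\bigcap_{r>0} K_r = K$, continuity from above lets me fix, for any $\eta>0$, a radius $r$ with $\mu(K_r) \le \mu(K) + \eta$. The closed-graph property then forces $\Ga_\vep^\al(x_k) \subseteq K_r$ for all large $k$: otherwise there would exist $y_k \in \Ga_\vep^\al(x_k)$ with $d(y_k,K) > r$, and a convergent subsequence $y_k \to y$ would satisfy $d(y,K) \ge r > 0$, while Lemma \ref{lemma_convergence_dyn_balls} forces $y \in \Ga_\vep^\al(x) = K$, a contradiction. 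Consequently $\mu_k(\Ga_\vep^\al(x_k)) \le \mu_k(K_r)$ for large $k$, and upper semicontinuity of $\nu \mapsto \nu(K_r)$ on the closed set $K_r$ gives $1/n \le \limsup_k \mu_k(K_r) \le \mu(K_r) \le \mu(K) + \eta$. Letting $\eta \to 0$ yields $\mu(\Ga_\vep^\al(x)) \ge 1/n$, so $\mu \in C_n$ and $C_n$ is closed. This makes each $U_n$ open and exhibits $\ExpF(\al,\vep) = \bigcap_n U_n$ as a $G_\de$-set, as desired.
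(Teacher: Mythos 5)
Your proposal is correct and follows essentially the same route as the paper's proof: the same decomposition of $\ExpF(\al,\vep)$ as a countable intersection of complements of the sets $C(\rho)=\{\mu : \mu(\Ga_\vep^\al(x))\geq\rho \text{ for some }x\}$, the same use of Lemma \ref{lemma_convergence_dyn_balls} to trap $\Ga_\vep^\al(x_k)$ inside a closed neighborhood of $\Ga_\vep^\al(x)$ for large $k$, and the same Portmanteau upper-semicontinuity argument followed by shrinking the neighborhood. The only cosmetic difference is that you parametrize the neighborhoods by a radius $r$ and pass $\eta\to0$, whereas the paper quantifies over all compact neighborhoods.
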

\begin{proof}
    Let $\al \in (0,1)$ and $\vep>0$. Set
    $$C(\rho) = \left\{ \mu \in \SM(X) : \mu(\Ga_\vep^\al(x)) \geq \rho \text{ for some } x \in X\right\}.$$

    Observe that $$\SE^\phi(\al,\ve) = \bigcap_{m\in\N} \SM(X) \setminus C(m^{-1}).$$
    
    We claim that $C(\rho)$ is closed in $\SM(X)$ for every $\rho>0$. This implies that $\SM(X) \setminus C(m^{-1})$ is open for every $m \in \N$, and consequently, that $\SE^\phi(\al,\ve)$ is a $G_\de$-subset of $\SM(X)$.
    
    Let us prove the claim. Fix $\rho >0$. Let $\{\mu_n\}_{n=1}^\infty \subset C(\rho)$ be a sequence converging to some $\mu \in \SM(X)$. Since $\mu_n \in C(\rho)$, there exists a sequence $\{x_n\}_{n=1}^\infty \subset X$ such that for every $n \in \N$ it holds that
    $\mu_n(\Ga_\vep^\al(x_n)) \geq \rho.$

    Since $X$ is compact, passing, if necessary, to a subsequence, we can assume that $\{x_n\}_{n=1}^\infty$ converges to some $x \in X$.
    Fix a compact neighborhood $K$ of $\Ga_\vep^\al(x)$. Let $O = \operatorname{int}(K)$ denote the interior of $K$. 

    Let us prove that $\Ga_\vep^\al(x_n) \subset K$ for every $n$ sufficiently large. Otherwise, there exists an increasing sequence $\{n_k\}_{k=1}^\infty \subset \N$ such that $\Ga_\vep^\al(x_{n_k}) \not\subset O$ for every $k \in \N$. Thus, there exists a sequence $\{z_k\}_{k=1}^\infty$ such that $z_k \in \Ga_\vep^\al(x_{n_k})\setminus O$ for each $k \in \N$.

    By compactness of $X$, we can assume that $\{z_k\}_{k=1}^\infty$ converges to some $z \in X$. By Lemma \ref{lemma_convergence_dyn_balls}, we have $z\in \Ga_\vep^\al(x)$. However, since $O$ is open, it follows by construction that $z \notin O$, which is an absurd. 
    
    Hence, for every sufficiently large $n \in \N$ we have $\Ga_\vep^\al(x_n) \subseteq K$, and consequently, $\mu_n(\Ga_\vep^\al(x_n)) \leq \mu_n(K)$. Since $\mu_n \to \mu$, by the Portmanteau Theorem we obtain
    $$\rho \leq  \limsup_{n \to \infty}  \mu_n(\Ga_\vep^\al(x_n)) \leq \limsup_{n \to \infty} \mu_n(K) \leq \mu(K).$$

    Therefore, $\mu(K) \geq \rho$ for every compact neighborhood of $\Ga_\vep^\al(x)$, which implies $\mu(\Ga_\vep^\al(x)) \geq \rho$.
\end{proof}

In the discrete-time setting, the proof that expansive measures can be weak*-accumulated by expansive measures whose support is invariant for the systems relies on invariance of expansive measures with respect to the pushforward preserving the same expansivity constant. That is, if $\mu$ is an expansive measure for a homeomorphism $f \colon X \to X$ with expansivity constant $\vep>0$, then $f*\mu$ is an expansive measure for $f$ with expansivity constant $\vep$. This follows directly from the fact that $f^{-1}(B_f^\infty(x,\vep)) = B_f^\infty(f^{-1}(x),\vep)$ for every $x \in X$.

For flows, due to the reparametrizations, we cannot guarantee that $\phi_t(\Ga^\al_\vep(x)) = \Ga^\al_\vep(\phi_t(x))$. In particular, we cannot guarantee that $\ExpF(\al,\vep)$ is invariant with respect to the pushforward with respect to time-$t$ maps, which is a key step of the argument. 

It turns out that we can prove such invariance for a slightly different set of expansive measures. Note that $\ExpF(\vep,\vep) \subseteq \ExpF(\de,\de)$ for every $0<\de<\vep$. We then define $$\CExpF (\vep) = \bigcap_{0 < \de<\vep} \ExpF(\de,\de).$$
It is yet unclear if $\CExpF(\vep)=\ExpF(\vep,\vep)$.

\begin{remark}\label{remark_gdelta}
    It follows from Lemma \ref{lemma_exp_gdelta} that $\CExpF(\vep)$ is a $G_\de$-subset of $\SM(X)$ for every $\vep \in (0,1)$.
\end{remark}

\begin{lemma}\label{lemma_translation_bowen_ball}
    Let $x,y \in X$ and $\vep>0$. If $d(\phi_s(x), \phi_{h(s)}(y)) \leq \vep$ for every $s \in \R$ and some $h \in Rep(\al)$, then for every $t \in \R$ we have
    $$\phi_t(y) \in \Ga^\al_\vep(\phi_{h^{-1}(t)}(x)).$$
\end{lemma}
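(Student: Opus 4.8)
The plan is to exhibit an explicit reparametrization witnessing the claimed membership. Unwinding the definition of $\Ga^\al_\vep$, I must produce some $g \in Rep(\al)$ such that
\[
d\bigl(\phi_s(\phi_{h^{-1}(t)}(x)),\,\phi_{g(s)}(\phi_t(y))\bigr)\leq\vep \quad\text{for all } s\in\R.
\]
Using the flow property to rewrite $\phi_s(\phi_{h^{-1}(t)}(x))=\phi_{s+h^{-1}(t)}(x)$ and $\phi_{g(s)}(\phi_t(y))=\phi_{g(s)+t}(y)$, the target inequality becomes $d(\phi_{s+h^{-1}(t)}(x),\phi_{g(s)+t}(y))\leq\vep$. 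Comparing this with the hypothesis $d(\phi_u(x),\phi_{h(u)}(y))\leq\vep$ evaluated at $u=s+h^{-1}(t)$ makes the natural candidate transparent: one wants $g(s)+t=h(s+h^{-1}(t))$, i.e.
\[
g(s)=h\bigl(s+h^{-1}(t)\bigr)-t.
\]
With this choice the desired inequality holds verbatim by substituting $u=s+h^{-1}(t)$ into the hypothesis.

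The second and main step is to check that this $g$ genuinely lies in $Rep(\al)$. The normalization $g(0)=h(h^{-1}(t))-t=t-t=0$ is immediate. Monotonicity and the homeomorphism property are inherited from $h$, since $g$ is obtained from $h$ by precomposing with the translation $s\mapsto s+h^{-1}(t)$ and then subtracting the constant $t$, neither of which affects invertibility or orientation. For the slope bound, I would observe that for $s\neq s'$, writing $a=s+h^{-1}(t)$ and $b=s'+h^{-1}(t)$ gives $a-b=s-s'$ and hence
\[
\frac{g(s)-g(s')}{s-s'}=\frac{h(a)-h(b)}{a-b},
\]
so the $Rep(\al)$ inequality for $g$ coincides exactly with the $Rep(\al)$ inequality for $h$.

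There is no real obstacle here; the statement is a direct verification once the correct reparametrization is identified. The only conceptually delicate point, and the one I would emphasize, is that the defining constraint of $Rep(\al)$ is \emph{translation-invariant}: it is phrased purely in terms of difference quotients $\frac{h(s)-h(t)}{s-t}$, which are unchanged under shifting the argument of $h$ by a fixed constant. This is precisely what allows the $\al$-bound to pass from $h$ to $g$ without loss, and it is the reason the lemma holds for the class $Rep(\al)$ rather than merely for $Rep$.
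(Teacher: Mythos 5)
Your proposal is correct and follows essentially the same route as the paper: both define $g(s)=h(s+h^{-1}(t))-t$ and substitute $u=s+h^{-1}(t)$ into the hypothesis. The only difference is that you spell out the verification that $g\in Rep(\al)$ via translation-invariance of the difference quotients, which the paper simply asserts; this is a worthwhile clarification but not a different argument.
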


\begin{proof}
    Let $x,y \in X$, $\vep>0$, and $h \in Rep(\al)$ such that $d(\phi_s(x), \phi_{h(s)}(y)) \leq \vep$ for every $s \in \R$. Fix $t \in \R$ and set $g(s) = h(s+h^{-1}(t)) - t$. Note that $g \in Rep(\al)$. Also, for every $s \in \R$ we have 
    $$\phi_{g(s)}(\phi_t(y))=\phi_{h(s+h^{-1}(t))-t)}(\phi_t(y)) = \phi_{h(s+h^{-1}(t))}(y).$$

    Thus, it follows that for every $s \in \R$ one has
    \[d(\phi_s(\phi_{h^{-1}}(x)), \phi_{g(s)}(\phi_t(y)) =d(\phi_{s + h^{-1}(t)}(x), \phi_{h(s+h^{-1}(t))}(y))\leq \vep. \qedhere \]
\end{proof}

Our argument to prove that $\CExpF(\vep)$ is $\phi_t*$-invariant for every $t \in \R$ relies on making small perturbations on the reparametrizations. It is not hard to see that if $h \in Rep$ and $t \in \R\setminus\{0\}$, then for every interval $I \subset \R$ containg $t$ and $s \in I \setminus\{t\}$ we can find $g \in Rep$ such that $g(t)=s$ and $h\big|_{\R \setminus I} = g\big|_{\R \setminus I}$.
We observe that if $h \in Rep(\vep)$, then it is not true that we can make a similar perturbation and obtain $g \in Rep(\vep)$. 
For example, assume that for some $h \in Rep(\vep)$ there exists $r < t < s$ such that 
$$\left| \frac{h(r)-h(t)}{r-t}-1 \right|=\left| \frac{h(s)-h(t)}{s-t}-1 \right|=\vep.$$
If $I \subset (r,s)$ and $g \in Rep$ satisfies $g(r)=h(r)$ and $g(s) = h(s)$, then it is not hard to see that $g \in Rep(\vep)$ if and only if $g(t)=h(t)$. Nevertheless, we can prove the following key lemma.

\begin{lemma}\label{lemma_rep_rational}
    Let $\phi$ be a regular flow on $X$, $x \in X$, and $\vep \in (0,1)$. If $y \in \Ga_\de^\de(x)$, then for every $\de \in (0,\vep)$ and $t \in \R$ there exists $g \in Rep(\vep)$ such that 
    \begin{enumerate}
        \item $g^{-1}(t) \in \Q$;
        \item $d(\phi_s(x), \phi_{g(s)}(y)) \leq \vep$ for every $s \in \R$.
    \end{enumerate}
\end{lemma}

\begin{proof}
    Fix $x \in X$, $t \in \R$, and $\vep \in (0,1)$. If $y \in \Ga_\de^\de(x)$, then there exists $h \in Rep(\de)$ such that $d(\phi_s(x),\phi_{h(s)}(y)) \leq \de$ for every $s \in \R$. 
    
    By Lemma \ref{lem: Bowen}, there exists $\De >0$ such that for every $z \in X$ and every $r,s \in \R$ satisfying $|r-s|<\De$, we have $d(\phi_r(z), \phi_s(z))<\vep - \de$. We can assume without loss of generality that $\De \leq \min\{\frac{\vep}{3}, \vep-\de\}$. Let $I\subset \R$ be an interval containing $t$ so that $|h(I)|<\De$. We can find $g \in Rep$ (not necessarily in $Rep(\vep)$) such that $h\big|_{\R \setminus I} = g\big|_{\R \setminus I}$ and $g^{-1}(t) \in \Q$.

    Note that, by construction, one has $g(I) = h(I)$. Hence, for every $s \in I$ we have $|g(s) - h(s)|<\De$, which implies $d(\phi_{g(s)}(y), \phi_{h(s)}(y)) < \vep -\de$. As a consequence, by the triangle inequality, we obtain that for every $s \in I$ it holds that
    \begin{equation*}
        d(\phi_s(x),\phi_{g(s)}(y)) \leq d(\phi_s(x),\phi_{h(s)}(y)) + d(\phi_{h(s)}(y),\phi_{g(s)}(y)) \leq \vep.
    \end{equation*}
    Therefore, $d(\phi_s(x),\phi_{g(s)}(y)) \leq \vep$ for every $s \in \R$.
    
    It remains to prove that $g \in Rep(\vep)$. We will show that for every $r,t \in \R$ one has
    \begin{equation}\label{eq_rep_vep}
        |g(t)-g(r) - (t-r)|\leq\vep.
    \end{equation}
    
    Since $h \in Rep(\de)$ and $h\big|_{\R \setminus I} = g\big|_{\R \setminus I}$, if $r,t \in \R\setminus I$, then \eqref{eq_rep_vep} trivially holds. 
    On the other hand, if $r,t \in I$, then, by construction, we have
    $$|g(t)-g(r) - (t-r)| \leq |g(t)-h(t)| + |g(r)-h(r)| + |t-r| \leq 3\De \leq \vep.$$
    
    Lastly, if $r \in \R \setminus I$ and $t \in I$, then
    \begin{align*}
        |g(t)-g(r) - (t-r)| &\leq |g(t) - h(t)| + |h(t)-g(r) -(t-r)|\\
        &=|g(t) - h(t)| + |h(t)-h(r) -(t-r)|\\
        &\leq \De + \de < \vep. \qedhere
    \end{align*}
\end{proof}

We are finally in a position to show that the sets $\CExpF(\vep)$ are invariant under the action of the pushforward map induced in $\SM(X)$ by $\phi_t$ for every $t \in \R$ and $\vep \in (0,1)$. 

\begin{lemma}\label{lemma_invariance_smaller_radius}
    Let $\phi$ be a regular flow on $X$ and $\vep \in (0,1)$. For every $\de \in (0,\vep)$, and $t \in \R \setminus \{0\}$ it holds that
    $$\phi_t*\ExpF(\vep,\vep) \subseteq \ExpF(\de,\de).$$
    In particular, $\CExpF(\vep)$ is $\phi_t*$-invariant for every $\vep \in (0,1)$ and $t \in \R$.
\end{lemma}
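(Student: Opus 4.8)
The plan is to fix $\vep_0$ as the constant furnished by Lemma \ref{lemma_rep_rational}, and then prove the inclusion of measure classes by showing that each pushed-forward ball of the smaller radius $\de$ is contained in a \emph{countable} union of $(\al,\vep)$-balls. Fix $\al \in (0,1)$, let $\vep \in (0,\vep_0]$, $\de \in (0,\vep)$, $t \in \R$, and take $\mu \in \ExpF(\al,\vep)$. Since $(\phi_t*\mu)(A) = \mu(\phi_{-t}(A))$, it suffices to prove $\mu(\phi_{-t}(\Ga_\de^\al(x))) = 0$ for every $x \in X$. When $t = 0$ this is immediate, as $\phi_0 = \mathrm{id}$ and $\de<\vep$ give $\Ga_\de^\al(x) \subseteq \Ga_\vep^\al(x)$, whence $\mu(\Ga_\de^\al(x)) = 0$. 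So I would assume $t \neq 0$ and fix $x \in X$.

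For the key inclusion, I would pick an arbitrary $y \in \Ga_\de^\al(x)$ and locate its image $\phi_{-t}(y)$. Applying Lemma \ref{lemma_rep_rational} at the parameter $-t$ (legitimate since $-t \neq 0$) produces $g \in Rep(\al)$ with $g^{-1}(-t) \in \Q$ and $d(\phi_s(x), \phi_{g(s)}(y)) \leq \vep$ for all $s \in \R$. Feeding this $g$ into Lemma \ref{lemma_translation_bowen_ball}, with $-t$ in the role of its translation parameter, yields $\phi_{-t}(y) \in \Ga_\vep^\al(\phi_{g^{-1}(-t)}(x))$. Setting $q = g^{-1}(-t) \in \Q$, this shows
$$\phi_{-t}(\Ga_\de^\al(x)) \subseteq \bigcup_{q \in \Q} \Ga_\vep^\al(\phi_q(x)).$$
Because $\mu \in \ExpF(\al,\vep)$, every set on the right-hand side is $\mu$-null, and the union is countable, so countable subadditivity gives $\mu(\phi_{-t}(\Ga_\de^\al(x))) = 0$. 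As $x$ is arbitrary, $\phi_t*\mu \in \ExpF(\al,\de)$, establishing the first assertion.

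For the ``in particular'' part I would argue as follows. Let $\mu \in \CExpF(\al,\vep) = \bigcap_{0<\eta<\vep} \ExpF(\al,\eta)$ and fix any $\de' \in (0,\vep)$. Choosing $\de \in (\de',\vep)$ we have $\mu \in \ExpF(\al,\de)$ and $\de \le \vep_0$, so the first part, with $\de$ in the role of $\vep$ and $\de'$ in the role of $\de$, gives $\phi_t*\mu \in \ExpF(\al,\de')$. Since $\de' < \vep$ was arbitrary, $\phi_t*\mu \in \CExpF(\al,\vep)$; that is, $\phi_t*\CExpF(\al,\vep) \subseteq \CExpF(\al,\vep)$. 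Applying the same inclusion with $-t$ in place of $t$ and composing with $\phi_t*$ (using $\phi_t*\circ\phi_{-t}* = \mathrm{id}$) forces equality, so $\CExpF(\al,\vep)$ is $\phi_t*$-invariant.

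The step I expect to be the crux is securing the countability of the covering family. Lemma \ref{lemma_translation_bowen_ball} on its own only gives $\phi_{-t}(y) \in \Ga_\vep^\al(\phi_{h^{-1}(-t)}(x))$, where the center depends on the witnessing reparametrization $h$ and thus ranges a priori over the uncountable set $\{\phi_s(x) : s \in \R\}$; a union over such centers carries no useful measure bound. The whole reason for passing from radius $\de$ to the strictly larger radius $\vep$ is to create enough slack to invoke Lemma \ref{lemma_rep_rational} and rationalize $g^{-1}(-t)$ while keeping $g \in Rep(\al)$ and the orbit within distance $\vep$; this is precisely what collapses the family of centers to the countable set $\{\phi_q(x) : q \in \Q\}$ and makes subadditivity applicable.
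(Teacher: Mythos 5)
Your proof is correct and follows essentially the same route as the paper: rationalize the reparametrization via Lemma \ref{lemma_rep_rational}, translate with Lemma \ref{lemma_translation_bowen_ball} to cover $\phi_{-t}(\Ga_\de^\al(x))$ by the countable family $\{\Ga_\vep^\al(\phi_q(x))\}_{q\in\Q}$, and conclude by subadditivity, then pass to $\CExpF(\al,\vep)$ with an intermediate radius. Your version is in fact slightly tidier than the paper's, since you treat $t=0$ separately (as Lemma \ref{lemma_rep_rational} requires a nonzero parameter) and you explicitly upgrade the inclusion to equality using $\phi_{-t}$.
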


\begin{proof}
    Let $\vep>0$. Fix $x \in X$, $\de \in (0,\vep)$, and $t \in \R \setminus \{0\}$. By Lemma \ref{lemma_rep_rational}, for every $y \in \Ga_{\de}^\de(x)$ there exists $g \in Rep(\vep)$ such that $\phi_{g^{-1}(-t)} \in \Q$ and for every $s \in \R$ we have $d(\phi_s(x), \phi_{g(s)}(z)) \leq \vep$.

    It follows from Lemma \ref{lemma_translation_bowen_ball} that $\phi_{-t}(y) \in \Ga_\vep^\vep(\phi_{g^{-1}(-t)}(x))$. As a consequence, we obtain that
    $$\phi_{-t}(\Ga_{\de}^\de(x)) \subseteq \bigcup_{r \in \Q} \Ga_\vep^\vep(\phi_r(x)).$$

    Hence, if $\mu \in \ExpF(\vep,\vep)$, then we have 
    \[\phi_t*\mu (\Ga_{\de}^\de(x)) = \mu (\phi_{-t}(\Ga_{\de}^\de(x)) \leq \sum_{r \in \Q} \mu (\Ga_\vep^\vep(\phi_r(x))) = 0.\]

     It remains to prove that $\CExpF(\vep)$ is $\phi_t*$-invariant for every $t \in \R$. Fix $\mu \in \CExpF(\vep)$ and $t \in \R$. Let $\de \in (0,\vep)$ and $\eta\in (\de,\vep)$. Since $\mu \in \ExpF(\eta,\eta)$, it follows that $\phi_t*\mu \in \ExpF(\de,\de)$, and therefore,
    \[\phi_t*\CExpF(\vep) \subseteq \CExpF(\vep). \qedhere\]
\end{proof}

\begin{lemma}
    If $\phi$ is a regular flow on $X$, then $\CExpF(\vep) \subseteq \SM(X)$ is convex for every $\vep \in (0,1)$.
\end{lemma}

\begin{proof}
    Let $\vep \in (0,1)$ and $\mu,\nu \in \CExpF(\vep)$. For every $\de \in (0,\vep)$, $s \in [0,1]$, and $x \in X$ we have
    \[\left(s\mu + \left(1-s\right)\nu\right)\left(\Ga_\de^\de\left(x\right)\right) = s\mu(\Ga_\de^\de(x)) + (1-s)\nu(\Ga_\de^\de(x)) = 0.\]
    Hence, $s\mu + \left(1-s\right)\nu \in \CExpF(\vep)$.
\end{proof}

\begin{definition}
    Given $\vep \in (0,1)$, the \textbf{$\vep$-measure expansive center} of a regular flow $\phi$ on a compact metric space $X$ is 
    $$E(\phi,\vep) = \bigcup_{\nu \in \CExpF(\vep)} \supp(\nu).$$
\end{definition}

The next lemma can be proved using the same argument of \cite[Lemma 4.3]{LMS}. 
\begin{lemma}\label{lemma_residual}
    Let $\phi$ be a regular flow on $X$ and $\vep \in (0,1)$. If $\CExpF(\vep) \neq \emptyset$, then there exists a residual set $\SR \subseteq \CExpF(\vep)$ such that $\supp(\mu) = E(\phi,\vep)$ for every $\mu \in \SR$. In particular, $E(\phi,\vep)$ is compact.
\end{lemma}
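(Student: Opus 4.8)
The plan is to read this as a Baire-category statement about the space $\CExpF(\al,\vep)$ itself. Two structural facts are already available: by the remark following Lemma \ref{lemma_exp_gdelta}, $\CExpF(\al,\vep)$ is a $G_\de$-subset of $\SM(X)$, and since $X$ is compact metric, $\SM(X)$ is a compact metrizable (hence Polish) space; a $G_\de$-subset of a Polish space is completely metrizable, so $\CExpF(\al,\vep)$ is a Baire space. By the preceding lemma it is also convex. These are precisely the two ingredients used in \cite[Lemma 4.3]{LMS}, so I would reproduce that argument in the present notation.

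First I would fix a countable base $\{U_k\}_{k\in\N}$ for the topology of $X$ (available since $X$ is second countable) and set
$$\mathcal{B} = \{k \in \N : \nu(U_k)>0 \text{ for some } \nu \in \CExpF(\al,\vep)\}.$$
The key reduction is the identity
$$\SR := \{\mu \in \CExpF(\al,\vep) : \supp(\mu) = E(\phi,\al,\vep)\} = \bigcap_{k \in \mathcal{B}} \{\mu \in \CExpF(\al,\vep) : \mu(U_k)>0\}.$$
One inclusion uses that $\supp(\mu) \subseteq E(\phi,\al,\vep)$ automatically, since $\mu \in \CExpF(\al,\vep)$ and $E(\phi,\al,\vep)$ is the union of all such supports; the reverse inclusion is the standard characterization of the support via a countable base, namely that $x \in \supp(\mu)$ iff $\mu(U_k)>0$ for every basic $U_k \ni x$, together with the fact that every point of $E(\phi,\al,\vep)$ is charged (through some $\nu$) exactly by the $U_k$ with $k\in\mathcal{B}$. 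This exhibits $\SR$ as a countable intersection of the sets $G_k := \{\mu : \mu(U_k)>0\}$.

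It then remains to show that each $G_k$ (for $k \in \mathcal{B}$) is open and dense in $\CExpF(\al,\vep)$. Openness is immediate from the Portmanteau theorem, since $\mu \mapsto \mu(U_k)$ is lower semicontinuous for open $U_k$. For density I would use convexity: given any $\mu \in \CExpF(\al,\vep)$ and a witness $\nu \in \CExpF(\al,\vep)$ with $\nu(U_k)>0$ (which exists because $k \in \mathcal{B}$), the convex combinations $\mu_t = (1-t)\mu + t\nu$ lie in $\CExpF(\al,\vep)$, satisfy $\mu_t(U_k) \geq t\,\nu(U_k)>0$, and converge weak* to $\mu$ as $t \to 0$. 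Hence every $\mu$ is approximated by elements of $G_k$. By the Baire category theorem in the Baire space $\CExpF(\al,\vep)$, the countable intersection $\SR$ is residual and in particular nonempty. For any $\mu \in \SR$ one has $\supp(\mu)=E(\phi,\al,\vep)$, and since supports are closed, $E(\phi,\al,\vep)$ is closed in $X$, hence compact.

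The step I expect to be the crux is the density of the $G_k$: it is where convexity of $\CExpF(\al,\vep)$ is genuinely used, and it is what turns the support-maximality condition --- a priori only a $G_\de$ condition --- into a residual one. The only other point needing care is confirming that $\CExpF(\al,\vep)$ is a Baire space, which is exactly why the $G_\de$ property recorded after Lemma \ref{lemma_exp_gdelta} is essential.
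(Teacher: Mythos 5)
Your proof is correct and follows essentially the same route as the paper, which simply invokes the argument of \cite[Lemma 4.3]{LMS}: write the set of measures with full support in $E(\phi,\al,\vep)$ as a countable intersection over a base $\{U_k\}$ of the open sets $\{\mu : \mu(U_k)>0\}$, prove density of each via convex combinations, and apply Baire in the completely metrizable $G_\de$-set $\CExpF(\al,\vep)$. All the ingredients you use (convexity of $\CExpF(\al,\vep)$ and its $G_\de$ property) are exactly the ones the paper sets up immediately before this lemma, so your reconstruction is the intended proof.
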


Our last ingredient to prove Theorem \ref{thm_supp_invariant_g_de_si} is the proof that $E(\phi,\vep)$ is $\phi$-invariant. This is a consequence of the following lemma.

\begin{lemma}\label{lemma_measure_center_invariant}
    If $\phi$ is a regular flow on $X$, then $E(\phi,\vep)$ is $\phi$-invariant for every $\vep \in (0,1)$.
\end{lemma}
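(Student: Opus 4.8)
The plan is to deduce the $\phi$-invariance of $E(\phi,\al,\vep)$ as an essentially formal consequence of the pushforward-invariance of $\CExpF(\al,\vep)$ established in Lemma \ref{lemma_invariance_smaller_radius}, combined with the standard description of how supports transform under homeomorphisms. First I would fix $\vep_0>0$ as in Lemma \ref{lemma_invariance_smaller_radius}, fix $\vep \in (0,\vep_0]$ and $t \in \R$, and recall that, since $\phi$ is a flow, every time-$t$ map $\phi_t \colon X \to X$ is a homeomorphism with continuous inverse $\phi_{-t}$. The only non-tautological ingredient is the elementary identity $\supp(\phi_t*\nu) = \phi_t(\supp(\nu))$ for $\nu \in \SM(X)$: indeed $x \in \supp(\phi_t*\nu)$ iff every open neighbourhood $U$ of $x$ satisfies $\nu(\phi_{-t}(U))>0$, and as $U$ ranges over the neighbourhoods of $x$, the sets $\phi_{-t}(U)$ range over the neighbourhoods of $\phi_{-t}(x)$, so this holds iff $\phi_{-t}(x)\in\supp(\nu)$, i.e. iff $x \in \phi_t(\supp(\nu))$.

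Next, Lemma \ref{lemma_invariance_smaller_radius} guarantees that $\phi_t*\nu \in \CExpF(\al,\vep)$ for every $\nu \in \CExpF(\al,\vep)$ and every $t \in \R$. Combining this with the support identity above, I would compute
$$\phi_t(E(\phi,\al,\vep)) = \bigcup_{\nu \in \CExpF(\al,\vep)} \phi_t(\supp(\nu)) = \bigcup_{\nu \in \CExpF(\al,\vep)} \supp(\phi_t*\nu) \subseteq E(\phi,\al,\vep),$$
since each set $\supp(\phi_t*\nu)$ is one of the supports appearing in the union defining $E(\phi,\al,\vep)$. Applying the same inclusion with $-t$ in place of $t$ yields $\phi_{-t}(E(\phi,\al,\vep)) \subseteq E(\phi,\al,\vep)$, which is equivalent to $E(\phi,\al,\vep) \subseteq \phi_t(E(\phi,\al,\vep))$. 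Therefore $\phi_t(E(\phi,\al,\vep)) = E(\phi,\al,\vep)$, and since $t \in \R$ was arbitrary, $E(\phi,\al,\vep)$ is $\phi$-invariant. (If $\CExpF(\al,\vep)=\emptyset$ then $E(\phi,\al,\vep)=\emptyset$ and the conclusion is trivial.)

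I do not expect a genuine obstacle here, as the statement is a clean corollary of Lemma \ref{lemma_invariance_smaller_radius}. The two points deserving slight care are: making sure the support identity is invoked for the honest homeomorphisms $\phi_t$ (whose inverses are $\phi_{-t}$), so that no reparametrization subtleties re-enter at this last stage — all reparametrization difficulties have already been absorbed into the proof of Lemma \ref{lemma_invariance_smaller_radius}; and obtaining the reverse inclusion by replacing $t$ with $-t$, which upgrades the one-sided containment coming from the lemma to the equality $\phi_t(E(\phi,\al,\vep)) = E(\phi,\al,\vep)$.
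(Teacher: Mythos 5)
Your proof is correct. The core mechanism is the same as the paper's: the identity $\supp(\phi_t*\nu)=\phi_t(\supp(\nu))$ for the homeomorphisms $\phi_t$, the pushforward-invariance of $\CExpF(\al,\vep)$ from Lemma \ref{lemma_invariance_smaller_radius}, and the symmetry $t\mapsto -t$ to upgrade the one-sided containment to equality. The one genuine difference is that the paper routes the argument through Lemma \ref{lemma_residual}: it picks a single measure $\mu$ in the residual subset of $\CExpF(\al,\vep)$ with $\supp(\mu)=E(\phi,\al,\vep)$ and applies the support identity to that one measure, whereas you apply the identity to every $\nu$ in the union defining $E(\phi,\al,\vep)$ and use that the image of a union is the union of the images. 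Your version is slightly more elementary, since it does not invoke the Baire-category machinery behind Lemma \ref{lemma_residual} (which the paper needs elsewhere anyway, for the approximation statement in Theorem \ref{thm_supp_invariant_g_de_si}); the paper's version buys nothing extra here beyond uniformity of exposition. Both arguments correctly dispose of the empty case and both obtain the reverse inclusion in essentially the same way (the paper writes it as $E=\phi_{-t}\circ\phi_t(E)\subseteq\phi_{-t}(E)$ for all $t$, which is the same symmetry you use).
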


\begin{proof}
    Recall that for every homeomorphism $f \colon X \to X$ and $\mu \in \SM(X)$ it holds that $f(\supp(\mu)) = \supp(f*\mu)$. Fix $\vep \in (0,1)$.

    If $\CExpF(\vep) =  \emptyset$, then it is trivially $\phi$-invariant. Now, assume that $\CExpF(\vep) \neq \emptyset$. By Lemma \ref{lemma_residual}, there exists a residual set $\SR \subseteq \CExpF(\vep)$ such that $\supp(\mu) = E(\phi,\vep)$ for every $\mu \in \SR$. By Lemma \ref{lemma_invariance_smaller_radius}, we have $\phi_t*\CExpF(\vep) \subseteq \CExpF(\vep)$ for every $t \in \R$. Therefore, if we take $\mu \in \SR$, then for every $t \in \R$ it holds
    \begin{equation}\label{eq_phit_contained}
        \phi_t(E(\phi,\vep)) = \phi_t(\supp(\mu)) = \supp(\phi_t*\mu) \subseteq E(\phi,\vep).
    \end{equation}
    On the other hand, since the time-$t$ maps are homeomorphisms, it follows from \eqref{eq_phit_contained} that for every $t \in \R$ it holds
    \begin{equation}\label{eq_phit_contains}
        E(\phi,\vep) = \phi_{-t} \circ \phi_t(E(\phi,\vep)) \subseteq \phi_{-t}(E(\phi,\vep)).
    \end{equation}

    Since \eqref{eq_phit_contained} and \eqref{eq_phit_contains} hold for every $t \in \R$, we conclude that $\phi_t(E(\phi,\vep)) = E(\phi,\vep)$ for every $t \in \R$, which completes the proof.
\end{proof}

Finally, let us provide the reader with a proof for Theorem \ref{thm_supp_invariant_g_de_si}.

\begin{proof}[Proof of Theorem \ref{thm_supp_invariant_g_de_si}]
    Let $\SE \subset \SM(X)$ denote the set of all expansive measures for $\phi$. Note that
    $$\SE = \bigcup_{n \in \N} \CExpF\left(\frac{1}{n}\right).$$

    Recall that by Remark \ref{remark_gdelta} $\CExpF\left(\frac{1}{n}\right)$ is a $G_\de$-subset of $\SM(X)$ for every $n \in \N$, and consequently, $\SE$ is a $G_{\de \si}$-subset of $\SM(X)$.
    
    By Theorem \ref{thm_al_vep_ball}, if $\mu \in \SM(X)$ is an expansive measure for $\phi$, then there exists $\vep \in (0,1)$ such that $\mu \in \CExpF(\vep)$. By Lemma \ref{lemma_residual}, there exists a residual (in particular, dense) set $\SR \subseteq \CExpF(\vep)$ such that $\supp(\nu) = E(\phi,\vep)$ for every $\nu \in \SR$. We complete the proof by observing that, by Lemma \ref{lemma_measure_center_invariant}, $E(\phi,\vep)$ is a $\phi$-invariant set.
\end{proof}

\section{Measure of Stable Classes}\label{section_thmD}

In this section we shall prove Theorem \ref{thm:classes}. Before proceeding with the proof, let us recall the concept of stable class. For any $x\in X$, the \textbf{strong stable set of $x$} is defined as $$ W^{ss}(x)=\{y\in X: \exists h\in Rep \ \text{s.t.} \ \lim\limits_{t\to\infty}d(\phi_{t}(x),\phi_{h(t)}(y))=0\},$$and the \textbf{stable set of $x$} is defined as $$W^s(x)=\bigcup_{t\in\R}W^{ss}(\phi_t(x)).$$

\begin{definition}Let $\phi$ be a continuous flow on $X$. We
 say that a set $A\subseteq X$ is a \textbf{stable class of $\phi$} if $A = W^s(x)$ for some $x\in X$. 
\end{definition}

\begin{proof}[Proof of Theorem \ref{thm:classes}]
Let $A$ be a stable class of $\phi$ and let $x_0\in X$ be such that $A=W^s(x_0)$. Fix $\mu \in \SM_\phi(X)$ an ergodic measure with positive entropy. By Theorem \ref{entropythm}, $\mu$ is positively expansive for $\phi$, and consequently, by Theorem \ref{thmA_forward}
 there exists $\eps>0$ such that $\mu(\Gamma^+_{\eps}(x))=0$ for every $x\in X$.

Let $0<\delta< \frac{\eps}{2}$. By Lemma \ref{lem: Bowen}, there is $\te>0$ such that if $y\in \phi_{[-\te,\te]}(x)$ for some $x \in X$, then $d(\phi_t(x),\phi_t(y))\leq\delta$ for every $t\in \R$, and consequently, $y\in \Gamma^+_{\delta}(x)$. 

Now, consider the following family of orbit arcs $$\SU=\{\phi_{[-\te,\te]}(y): y\in O(x_0)\}.$$ Since $\SU$ is a cover of $O(x_0)$ by orbit arcs, there are countably many points $y_i \in O(x_0)$ such that $$O(x_0)\subset \bigcup_{i\in \N} \phi_{[-\te,\te]}(y_i).$$

Fix $z\in W^s(x_0)$. By definition, there is $y\in O(x_0)$ such that $z\in W^{ss}(y)$. Hence, there are $h\in Rep$ and $s_0>0$, such that for every $s \geq s_0$ it holds that $$d(\phi_{h(s)}(z),\phi_s(y))\leq \delta.$$ We observe that $\phi_{s_0}(y) \in \phi_{[-\te,\te]}(y_i)$ for some $i \in \N$. As a consequence, we obtain that for every $t \in \R^+$ it holds that
$$d(\phi_{h(s_0+t)}(z),\phi_t(y_i))\leq d(\phi_{h(s_0+t)}(z),\phi_{s_0+t}(y))+ d(\phi_{s_0+t}(y),\phi_{t}(y_i))\leq \de+\de\leq\eps.$$
Hence, by writing $g(t)=h(s_0+t)-h(s_0)$, one can easily see that $\phi_{h(s_0)}(z)\in \Gamma^+_{\eps}(y_i)$. 
The implies that
$$W^s(x_0)\subset \bigcup_{i\in \N}\bigcup_{t\geq 0}\phi_{-t}(\Gamma^+_{\eps}(y_i)).$$

Since $\mu$ is positively expansive and by the invariance of $\mu$ one has $$\mu(\phi_{-t}(\Gamma^+_{\eps}(y_i)))=0,$$ for every $t\in \R^+$ and $i\in \N$. Hence, by arguing as in Lemma \ref{lemma_invariance_smaller_radius}, we conclude $$\mu(A)\leq \sum_{i\in \N}\sum_{t\in \Q\cap[0,+\infty]} \mu(\phi_{-t}(\Gamma^+_{\eps}(y_i)))=0.$$ 

Lastly, suppose that $\phi$ has only countable classes $\{A_n\}_{n\in \N}$. If there is $\mu \in \SM_\phi(X)$ is an ergodic measure with positive entropy, then $\mu(A_n)=0$ for every $n \in \N$. Since the collection of stable classes forms a partition of $X$, we would have $$\mu(X)=\sum_{n\in \N}\mu(A_n)=0,$$
which is a contradiction. Therefore, if $\phi$ has a measure $\mu \in \SM_\phi(X)$ with positive entropy, then it must have uncountable many stable classes. 
\end{proof}

\section*{Acknowledgments.}
We would like to thank Prof. Yun Yang for kindly pointing out an issue in one of the lemmas in a previous version of this work. 
Eduardo Pedrosa was financed in part by the
 Coordenação de Aperfeiçoamento de Pessoal de Nível Superior - (CAPES) Brasil - Finance Code 001, and by the Conselho
 Nacional de Desenvolvimento Científico e Tecnológico (CNPq) Brasil.
Alexandre Trilles was funded by NCN Sonata Bis grant no. 2019/34/E/ST1/00082.

\begin{table}[h]
\begin{tabularx}{\linewidth}{p{1.5cm}  X}
\includegraphics [width=1.8cm]{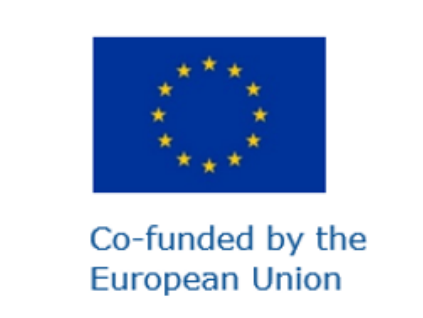} &
\vspace{-1.5cm}
This research is part of a project that has received funding from
the European Union's European Research Council Marie Sklodowska-Curie Project No. 101151716 -- TMSHADS -- HORIZON--MSCA--2023--PF--01.\\
\end{tabularx}
\end{table}

\bibliographystyle{plain}
\bibliography{references}

\end{document}